\newtheorem{theorem}{Theorem}[section]
\newtheorem{proposition}[theorem]{Proposition}
\theoremstyle{definition}
\numberwithin{equation}{section}
\newcommand\Los{{\L}o{\'s}}
\newcommand\N {{\mathbb N}} 
\newcommand\R {{\mathbb R}}
\newcommand\Q {{\mathbb Q}}
\newcommand\st{{\rm st}} 
\newcommand\parisotes{{$\pi\alpha\rho\iota\sigma\acute{o}\tau\eta\varsigma$}}
\newcommand\Stromholm{{Str\o mholm}}
\newcommand{\halo}{{\scalebox{1}[.3]{\ensuremath{\bigcirc}}}} 
\newcommand{\halobig}{{\scalebox{2}[.3]{\ensuremath{\bigcirc}}}}
\author[T. B.]{Tiziana Bascelli} \address{T. Bascelli, Via
S. Caterina, 16, 36030 Montecchio P.no (VI), Italy}
\email{tiziana.bascelli@virgilio.it}
\author[E. B.]{Emanuele Bottazzi} \address{E. Bottazzi, Dipartimento
di Matematica, Universit\`a di Trento, Italy}
\email{Emanuele.Bottazzi@unitn.it}
\author[F. H.]{Frederik Herzberg} \address{F. Herzberg, Center for
Mathematical Economics, Bielefeld University, D-33615 Bielefeld,
Germany}\email{fherzberg@uni-bielefeld.de}
\author[V. K.]{Vladimir Kanovei} \address{V. Kanovei, IPPI, Moscow,
and MIIT, Moscow, Russia}\email{kanovei@googlemail.com}
\author[K. K.]{Karin U. Katz}\address{K. Katz, Department of
Mathematics, Bar Ilan University, Ramat Gan 52900
Israel}\email{katzmik@macs.biu.ac.il}
\author[M. K.]{Mikhail G. Katz}\address{M. Katz, Department of
Mathematics, Bar Ilan University, Ramat Gan 52900
Israel}\email{katzmik@macs.biu.ac.il}
\author[T. N.]{Tahl Nowik}\address{T. Nowik, Department of
Mathematics, Bar Ilan University, Ramat Gan 52900
Israel}\email{tahl@math.biu.ac.il}
\author[D. S.]{David Sherry}\address{D. Sherry, Department of
Philosophy, Northern Arizona University, Flagstaff, AZ 86011, US}
\email{David.Sherry@nau.edu}
\author[S. S.]{Steven Shnider}\address{S.~Shnider, Department of
Mathematics, Bar Ilan University, Ramat Gan 52900
Israel}\email{shnider@macs.biu.ac.il}
\begin{document}

\thispagestyle{empty}


\title[Fermat, Leibniz, Euler, and the gang] {Fermat, Leibniz, Euler,
and the gang: The true history of the concepts of limit and shadow}

\begin{abstract}
Fermat, Leibniz, Euler, and Cauchy all used one or another form of
approximate equality, or the idea of discarding ``negligible'' terms,
so as to obtain a correct analytic answer.  Their inferential moves
find suitable proxies in the context of modern theories of
infinitesimals, and specifically the concept of \emph{shadow}.  We
give an application to decreasing rearrangements of real functions.
\end{abstract}

\maketitle

\tableofcontents

\section{Introduction}
\label{one}

The theories as developed by European mathematicians prior to 1870
differed from the modern ones in that none of them used the modern
theory of limits.  Fermat develops what is sometimes called a
``precalculus" theory, where the optimal value is determined by some
special condition such as equality of roots of some equation.  The
same can be said for his contemporaries like Descartes, Huygens, and
Roberval.

Leibniz's calculus advanced beyond them in working on the derivative
function of the variable~$x$.  He had the indefinite integral whereas
his predecessors only had concepts more or less equivalent to it.
Euler, following Leibniz, also worked with such functions, but
distinguished the variable (or variables) with constant
differentials~$dx$, a status that corresponds to the modern assignment
that~$x$ is the independent variable, the other variables of the
problem being dependent upon it (or them) functionally.

Fermat determined the optimal value by imposing a condition using his
\emph{adequality} of quantities.  But he did not really think of
quantities as functions, nor did he realize that his method produced
only a necessary condition for his optimisation condition.  For a more
detailed general introduction, see chapters 1 and 2 of the volume
edited by Grattan-Guinness (Bos et al.~1980 \cite{Bo}).

The doctrine of \emph{limits} is sometimes claimed to have replaced
that of \emph{infinitesimals} when analysis was rigorized in the 19th
century.  While it is true that Cantor, Dedekind and Weierstrass
attempted (not altogether successfully; see Ehrlich 2006 \cite{Eh06};
Mormann \& Katz 2013 \cite{MK}) to eliminate infinitesimals from
analysis, the history of the limit concept is more complex.  Newton
had explicitly written that his \emph{ultimate ratios} were not
actually ratios but, rather, \emph{limits} of prime ratios (see
Russell 1903 \cite[item 316, p.~338-339]{Ru03}; Pourciau 2001
\cite{Pou}).  In fact, the sources of a rigorous notion of limit are
considerably older than the 19th century.

In the context of Leibnizian mathematics, the limit of~$f(x)$ as~$x$
tends to~$x_0$ can be viewed as the ``assignable part'' (as Leibniz
may have put it) of~$f(x_0+dx)$ where~$dx$ is an ``inassignable''
infinitesimal increment (whenever the answer is independent of the
infinitesimal chosen).  A modern formalisation of this idea exploits
the standard part principle (see Keisler 2012 \cite[p.~36]{Ke12}).

In the context of ordered fields~$E$, the \emph{standard part
principle} is the idea that if~$E$ is a proper extension of the real
numbers~$\R$, then every finite (or \emph{limited}) element~$x\in E$
is infinitely close to a suitable~$x_0\in\R$.  Such a real number is
called the \emph{standard part} (sometimes called the \emph{shadow})
of~$x$, or in formulas,~$\st(x)=x_0$.  Denoting by~$E_{\text{f}}$ the
collection of finite elements of~$E$, we obtain a map
\[
\st: E_{\text{f}} \to \R.
\]
Here~$x$ is called \emph{finite} if it is smaller (in absolute value)
than \emph{some} real number (the term \emph{finite} is immediately
comprehensible to a wide mathematical public, whereas \emph{limited}
corresponds to correct technical usage); an \emph{infinitesimal} is
smaller (in absolute value) than \emph{every} positive real; and~$x$
is \emph{infinitely close} to~$x_0$ in the sense that~$x-x_0$ is
infinitesimal.

\def\lineticktop#1#2{
  \draw[-] ({#1},3.075) node [anchor=south] {#2} -- (#1,2.925) ;
}
 
\def\linetickbot#1#2{
  \draw[-] ({#1},-0.075) node [anchor=north] {#2} -- (#1,0.075) ;
}
 
\def\microscope#1{
  \draw [thick,-         ] (#1 -0.25,2.15) -- (#1+0,3) -- (#1 +0.25,2.15) ;
  \draw [thick,fill=white] (#1 +0.00,1.50) circle (1) ;  
  \fill [white           ] (#1 -0.25,2.15) -- (#1+0,3) -- (#1 +0.25,2.15) ;
  \draw [thick,blue,->   ](#1-0.95,1.5) -- (#1+0.95,1.5) ;
}
 
\def\microtick#1#2#3#4#5#6{
  \draw [-,#5] (#1+0.375*#2,1.5+#3) node [anchor=#6,font=\small] {#4} -- (#1+0.375*#2,1.5-#3) ;
  \draw [red,thin] (#1+0.375*#2,1.3) to [out=-90,in=90] (#1+0.00,0.6);
}
 
\begin{figure}

\begin {tikzpicture} [scale=2]
  \draw[blue,thick,->]      (-1.1,3) -- (4.1,3) coordinate (x axis);
 
  \microscope    {1.41}
  \microtick     {1.41}{-0.2}{0.1}{$r$}{}{south}
  \microtick     {1.41}{ 0.7}{0.05}{\tiny $r+\beta$}{blue}{south}
  \microtick     {1.41}{ 2.1}{0.05}{\tiny $r+\gamma$}  {blue}{south}
  \microtick     {1.41}{-1.7}{0.05}{\tiny $r+\alpha$}    {blue}{south}
  \draw[red,->] (1.41,0.6) -- (1.41,0.325) node [anchor=west] {$\scriptstyle \operatorname{st}$} -- (1.41,0.15) ;
 
  \lineticktop{-1}{$-1$}
  \linetickbot{-1}{$-1$}
  \lineticktop{ 0}{$ 0$}
  \linetickbot{ 0}{$ 0$}
  \lineticktop{+1}{$ 1$}
  \linetickbot{+1}{$ 1$}
  \lineticktop{+2}{$ 2$}
  \linetickbot{+2}{$ 2$}
  \lineticktop{+3}{$ 3$}
  \linetickbot{+3}{$ 3$}
  \lineticktop{+4}{$ 4$}
  \linetickbot{+4}{$ 4$}
 
  \lineticktop{+1.41}{}
  \linetickbot{+1.41}{$ r$}
 
  \draw[->]      (-1.1,0) -- (4.1,0) coordinate (x axis);
\end {tikzpicture}

\caption{\textsf{The standard part function, st, ``rounds off" a
finite hyperreal to the nearest real number.  The function st is here
represented by a vertical projection.  An ``infinitesimal microscope"
is used to view an infinitesimal neighborhood of a standard real
number $r$, where $\alpha$, $\beta$, and $\gamma$ represent typical
infinitesimals.  Courtesy of Wikipedia.}}
\label{vertical}
\end{figure}

Briefly, the standard part function ``rounds off'' a finite element
of~$E$ to the nearest real number (see Figure~\ref{vertical}).

The proof of the principle is easy.  A finite element~$x\in E$ defines
a Dedekind cut on the subfield~$\R\subset E$ (alternatively,
on~$\Q\subset\R$), and the cut in turn defines the real~$x_0$ via the
usual correspondence between cuts and real numbers.  One sometimes
writes down the relation
\[
x\approx x_0
\]
to express infinite closeness.  

We argue that the sources of such a relation, and of the standard part
principle, go back to Fermat, Leibniz, Euler, and Cauchy.  Leibniz
would discard the \emph{inassignable} part of~$2x+dx$ to arrive at the
expected answer,~$2x$, relying on his \emph{law of homogeneity} (see
Section~\ref{lei}).  Such an inferential move is mirrored by a
suitable proxy in the hyperreal approach, namely the standard part
function.

Fermat, Leibniz, Euler, and Cauchy all used one or another form of
approximate equality, or the idea of discarding ``negligible'' terms.
Their inferential moves find suitable proxies in the context of modern
theories of infinitesimals, and specifically the concept of
\emph{shadow}.

The last two sections present an application of the standard part to
decreasing rearrangements of real functions and to a problem on
divergent integrals due to S.~Konyagin.

This article continues efforts in revisiting the history and
foundations of infinitesimal calculus and modern nonstandard analysis.
Previous efforts in this direction include Bair et al.~(2013
\cite{B11}); Bascelli (2014 \cite{Ba14}); B\l{}aszczyk et al.~(2013
\cite{BKS}); Borovik et al.~(2012 \cite{BJK}, \cite{BK}); Kanovei et
al.~(2013 \cite{KKM}); Katz, Katz \& Kudryk (2014 \cite{KKK}); Mormann
et al.~(2013 \cite{MK}); Sherry et al.~(2014 \cite{SK}); Tall et
al.~(2014 \cite{TK}).

\section{Methodological remarks}
\label{two}

To comment on the historical subtleties of judging or interpreting
past mathematics by present-day standards,%
\footnote{Some reflections on this can be found in (Lewis 1975
\cite{Lew}).}
note that neither Fermat, Leibniz, Euler, nor Cauchy had access to the
\emph{semantic} foundational frameworks as developed in mathematics at
the end of the 19th and first half of the 20th centuries.  What we
argue is that their \emph{syntactic} inferential moves ultimately
found modern proxies in Robinson's framework, thus placing a firm
(relative to ZFC)%
\footnote{The Zermelo--Fraenkel Set Theory with the Axiom of Choice.}
semantic foundation underneath the classical procedures of these
masters.  Benacerraf (1965 \cite{Be65}) formulated a related dichotomy
in terms of mathematical practice \emph{vs} mathematical ontology.

For example, the Leibnizian laws of continuity (see Knobloch 2002
\cite[p.~67]{Kn02}) and homogeneity can be recast in terms of modern
concepts such as the \emph{transfer principle} and the \emph{standard
part principle} over the hyperreals, without ever appealing to the
semantic content of the technical development of the hyperreals as a
\emph{punctiform} continuum; similarly, Leibniz's proof of the product
rule for differentiation is essentially identical, at the syntactic
level, to a modern infinitesimal proof (see Section~\ref{lei}).

\subsection{A-track and B-track}

The crucial distinction between syntactic and semantic aspects of the
work involving mathematical continua appears to have been overlooked
by R.~Arthur who finds fault with the hyperreal proxy of the
Leibnizian continuum, by arguing that the latter was
\emph{non-punctiform} (see Arthur 2013 \cite{Ar}).  Yet this makes
little difference at the syntactic level, as explained above.
Arthur's brand of the syncategorematic approach following Ishiguro
(1990 \cite{Is}) involves a reductive reading of Leibnizian
infinitesimals as \emph{logical} (as opposed to \emph{pure}) fictions
involving a hidden quantifier \`a la Weierstrass, ranging over
``ordinary'' values.  This approach was critically analyzed in (Katz
\& Sherry 2013 \cite{KS1}); (Sherry \& Katz 2013 \cite{SK}); (Tho 2012
\cite{Th}).

Robinson's framework poses a challenge to traditional historiography
of mathematical analysis.  The traditional thinking is often dominated
by a kind of Weierstrassian teleology.  This is a view of the history
of analysis as univocal evolution toward the radiant Archimedean
framework as developed by Cantor, Dedekind, Weierstrass, and others
starting around 1870, described as the \emph{A-track} in a recent
piece in these \emph{Notices} (see Bair et al.~2013 \cite{B11}).

Robinson's challenge is to point out not only the possibility, but
also the existence of a parallel Bernoullian%
\footnote{Historians often name Johann Bernoulli as the first
mathematician to have adhered systematically and exclusively to the
infinitesimal approach as the basis for the calculus.}
track for the development of analysis, or \emph{B-track} for short.
The B-track assigns an irreducible and central role to the concept of
infinitesimal, a role it played in the work of Leibniz, Euler, mature
Lagrange,%
\footnote{In the second edition of his \emph{M\'ecanique Analytique}
dating from 1811, Lagrange fully embraced the infinitesimal in the
following terms: ``Once one has duly captured the spirit of this
system [i.e., infinitesimal calculus], and has convinced oneself of
the correctness of its results by means of the geometric method of the
prime and ultimate ratios, or by means of the analytic method of
derivatives, one can then exploit the infinitely small as a reliable
and convenient tool so as to shorten and simplify proofs''.  See (Katz
\& Katz 2011 \cite{KK11b}) for a discussion.}
Cauchy, and others.

The caliber of some of the response to Robinson's challenge has been
disappointing.  Thus, the critique by Earman (1975 \cite{Ea}) is
marred by a confusion of second-order infinitesimals like~$dx^2$ and
second-order hyperreal extensions like~$^\ast{^\ast\R}$; see (Katz \&
Sherry 2013 \cite{KS1}) for a discussion.

Victor J. Katz (2014 \cite{Ka14}) appears to imply that a B-track
approach based on notions of infinitesimals or indivisibles is limited
to ``the work of Fermat, Newton, Leibniz and many others in the 17th
and 18th centuries''.  This does not appear to be Felix Klein's view.
Klein formulated a condition, in terms of the mean value theorem,%
\footnote{The Klein--Fraenkel criterion is discussed in more detail in
Kanovei et al.~(2013 \cite{KKM}).}
for what would qualify as a successful theory of infinitesimals, and
concluded:
\begin{quote}
I will not say that progress in this direction is impossible, but it
is true that none of the investigators have achieved anything positive
(Klein 1908 \cite[p.~219]{Kl08}).
\end{quote}
Klein was referring to the current work on infinitesimal-enriched
systems by Levi-Civita, Bettazzi, Stolz, and others.  In Klein's mind,
the infinitesimal track was very much a current research topic; see
Ehrlich (2006 \cite{Eh06}) for a detailed coverage of the work on
infinitesimals around~1900.

\subsection{Formal epistemology: Easwaran on hyperreals}

Some recent articles are more encouraging in that they attempt a more
technically sophisticated approach.  K.~Easwaran's study (2014
\cite{Ea14}), motivated by a problem in formal epistemology,%
\footnote{The problem is concerned with saving philosophical
Bayesianism, a popular position in formal epistemology, which appears
to require that one be able to find on every algebra of doxastically
relevant propositions some subjective probability assignment such that
only the impossible event ($\emptyset$) will be assigned an
initial/uninformed subjective probability, or credence, of~$0$.}
attempts to deal with technical aspects of Robinson's theory such as
the notion of \emph{internal set}, and shows an awareness of recent
technical developments, such as a \emph{definable hyperreal system} of
Kanovei \& Shelah (2004 \cite{KS04}).

Even though Easwaran, in the tradition of Lewis (1980 \cite{Lew80})
and Skyrms (1980 \cite{Sk80}), tries to engage seriously with the
intricacies of employing hyperreals in formal epistemology,%
\footnote{For instance, he concedes: ``And the hyperreals may also
help, as long as we understand that they do not tell us the precise
structure of credences.'' (Easwaran 2014 \cite{Ea14}, Introduction,
last paragraph).}
not all of his findings are convincing.  For example, he assumes that
physical quantities cannot take hyperreal values.%
\footnote{Easwaran's explicit premise is that ``All physical
quantities can be entirely parametrized using the standard real
numbers.''  (Easwaran 2014 \cite[Section~8.4, Premise~3]{Ea14}).}
However, there exist physical quantities that are not directly
observable.  Theoretical proxies for unobservable physical quantities
typically depend on the chosen mathematical model.  And not
surprisingly, there are mathematical models of physical phenomena
which operate with the hyperreals, in which physical quantities take
hyperreal values.  Many such models are discussed in the volume by
Albeverio et al.~(1986 \cite{AFHL86}).
 
For example, certain probabilistic laws of nature have been formulated
using hyperreal-valued probability theory.  The construction of
mathematical Brownian motion by Anderson (1976 \cite{A76}) provides a
hyperreal model of the botanical counterpart.  It is unclear why (and
indeed rather implausible that) an observer~A, whose degrees of belief
about botanical Brownian motion stem from a mathematical model based
on the construction of mathematical Brownian motion by Wiener (1923
\cite{W23}) should be viewed as being more rational than another
observer~B, whose degrees of belief about botanical Brownian motion
stem from a mathematical model based on Anderson's construction of
mathematical Brownian motion.%
\footnote{One paradoxical aspect of Easwaran's methodology is that,
despite his anti-hyperreal stance in (2014 \cite{Ea}), he \emph{does}
envision the possibility of useful infinitesimals in an earlier joint
paper (Colyvan \& Easwaran 2008 \cite{CE}), where he cites John Bell's
account (Bell's presentation of Smooth Infinitesimal Analysis in
\cite{Be06} involves a category-theoric framework based on
intuitionistic logic); but never the hyperreals.  Furthermore, in the
2014 paper he cites the \emph{surreals} as possible alternatives to
the real number--based description of the ``structure of physical
space" as he calls it; see Subsection~\ref{will} below for a more
detailed discussion.}

Similarly problematic is Easwaran's assumption that an infinite
sequence of probabilistic tests must necessarily be modeled by the set
of standard natural numbers (this is discussed in more detail in
Subsection~\ref{will}).  Such an assumption eliminates the possibility
of modeling it by a sequence of infinite hypernatural length.  Indeed,
once one allows for infinite sequences to be modeled in this way, the
problem of assigning a probability to an infinite sequence of coin
tosses that was studied in (Elga 2004 \cite{E04}) and (Williamson 2007
\cite{Wi07}) allows for an elegant hyperreal solution (Herzberg 2007
\cite{H06BJPS}).

Easwaran reiterates the common objection that the hyperreals are
allegedly ``non-constructive'' entities.  The bitter roots of such an
allegation in the radical constructivist views of E. Bishop have been
critically analyzed in (Katz \& Katz 2011 \cite{KK11d}), and
contrasted with the liberal views of the Intuitionist A.~Heyting, who
felt that Robinson's theory was ``a standard model of important
mathematical research'' (Heyting 1973 \cite[p.~136]{He73}).  It is
important to keep in mind that Bishop's target was \emph{classical
mathematics} (as a whole), the demise of which he predicted in the
following terms: 
\begin{quote} 
Very possibly classical mathematics will cease to exist as an
independent discipline (Bishop 1968 \cite[p.~54]{Bi68}).
\end{quote}

\begin{figure}
\includegraphics[height=2.2in]{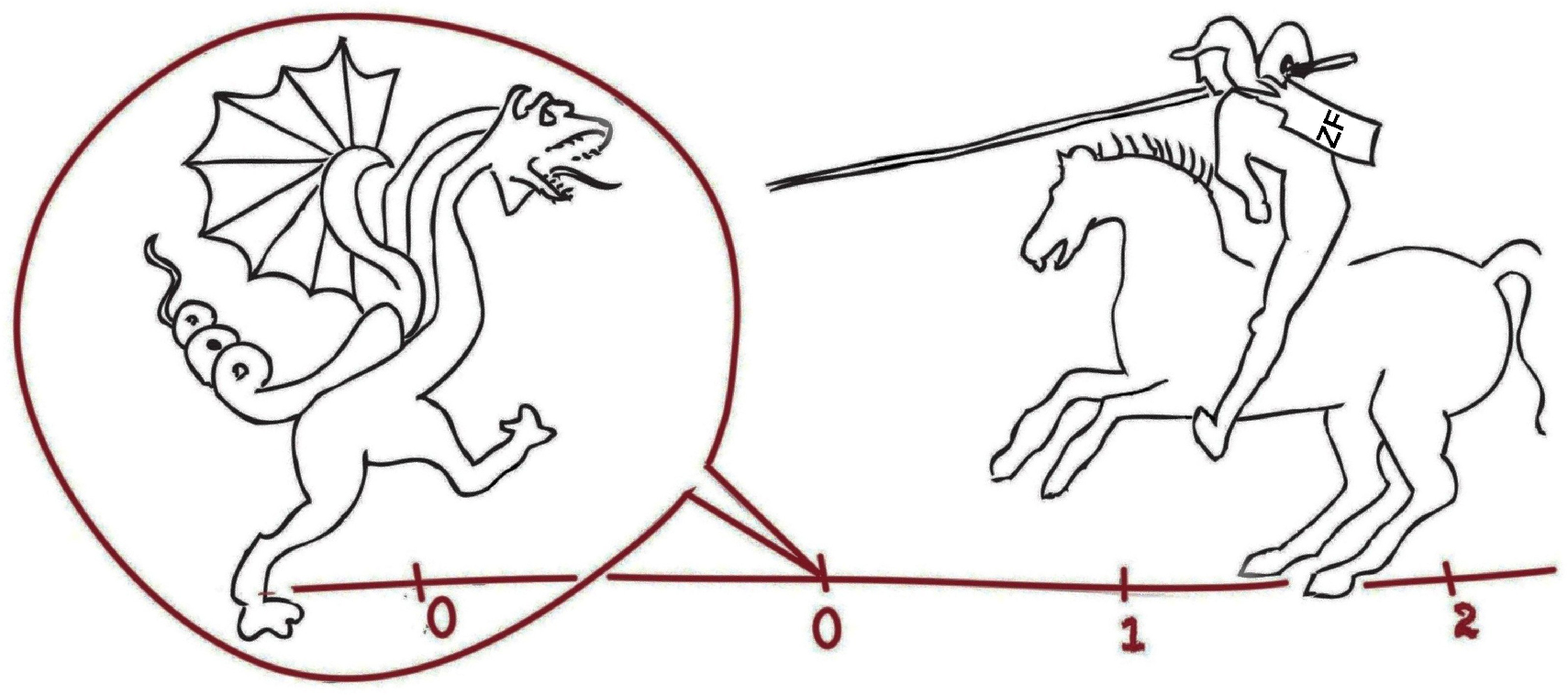}
%
%
\caption{\textsf{Easwaran's attempted slaying of the infinitesimal,
following P.~Uccello.  Uccello's creature is shown as inhabiting an
infinitesimal neighborhood of~$0$.}}
\label{uccello}
\end{figure}

\subsection{Zermelo--Fraenkel axioms and the Feferman--Levy model}

In his analysis, Easwaran assigns substantial weight to the fact that
``it is consistent with the ZF [Zermelo--Fraenkel set theory] without
the Axiom of Choice'' that the hyperreals do not exist (Easwaran 2014
\cite[Section~8.4]{Ea}); see Figure~\ref{uccello}.  However, on the
same grounds, one would have to reject parts of mathematics with
important applications.  There are fundamental results in functional
analysis that depend on the Axiom of Choice such as the Hahn--Banach
theorem; yet no one would suggest that mathematical physicists or
mathematical economists should stop exploiting them.

Most real analysis textbooks prove the~$\sigma$-additivity (i.e.,
countable additivity) of Lebesgue measure, but~$\sigma$-additivity is
not deducible from ZF, as shown by the Feferman--Levy model; see
(Feferman \& Levy 1963 \cite{FL}); (Jech 1973 \cite[chapter 10]{Je}).
Indeed, it is consistent with ZF that the following holds:
\begin{itemize}
\item[$(\ast)$] 
the continuum~$\R$ of real numbers is a countable union
$\R=\bigcup_{n\in\N}X_n$ of countable sets~$X_n$.
\end{itemize}
See (Cohen 1966 \cite[chapter IV, section 4]{Co66}) for a description
of a model of ZF in which~$(\ast)$ holds.%
\footnote{Property~$(\ast)$ may appear to be asserting the
countability of the continuum.  However, in order to obtain a
bijective map from a countable collection of countable sets to
$\N\times\N$ (and hence, by diagonalization, to~$\N$), the Axiom of
Choice (in its ``countable'' version which allows a countably-infinite
sequence of independent choices) will necessarily be used.}
Note that~$(\ast)$ implies that the Lebesgue measure is not countably
additive, as all countable sets are null sets whereas~$\R$ is not a
null set.  Therefore, countable additivity of the Lebesgue measure
cannot be established in ZF.

Terence Tao wrote:
\begin{quote}
By giving up countable additivity, one loses a fair amount of measure
and integration theory, and in particular the notion of the
expectation of a random variable becomes problematic (unless the
random variable takes only finitely many values).  (Tao 2013
\cite{Tao})
\end{quote}
Tao's remarks suggest that deducibility from ZF is not a reasonable
criterion of mathematical plausibility by any modern standard.

There are models of ZF in which there are infinitesimal numbers, if
properly understood, among the real numbers themselves.  Thus, there
exist models of ZF which are also models of Nelson's (1987 \cite{N87})
\emph{radically elementary} mathematics, a subsystem of Nelson's (1977
\cite{N77}) Internal Set Theory.  Here \emph{radically elementary}
mathematics is an extension of classical set theory (which may be
understood as ZF%
\footnote{Even though Nelson would probably argue for a much weaker
system; see Herzberg (2013 \cite[Appendix A.1]{Her}), citing Nelson
(2011 \cite{N11}).}
) by a unary predicate, to be interpreted as 
\begin{quote} 
``\ldots{} is a standard natural number", 
\end{quote}
with additional axioms that regulate the use of the new predicate
(notably external induction for standard natural numbers) and ensure
the existence of non-standard numbers. Nelson (1987
\cite[Appendix]{N87}) showed that a major part of the theory of
continuous-time stochastic processes is in fact equivalent to a
corresponding \emph{radically elementary} theory involving
infinitesimals, and indeed, \emph{radically elementary} probability
theory has seen applications in the sciences; see for example (Reder
2003 \cite{R03}).

In sum, mathematical descriptions of non-trivial natural phenomena
involve, by necessity, some degree of mathematical idealisation, but
Easwaran has not given us a good reason why only such mathematical
idealisations that are feasible in \emph{every} model of ZF should be
acceptable.  Rather, as we have already seen, there are very good
arguments (e.g., from measure theory) against such a high reverence
for ZF.

\subsection{Skolem integers and Robinson integers}

Easwaran recycles the well-known claim by A.~Connes that a
hypernatural number leads to a nonmeasurable set.  However, the
criticism by Connes%
\footnote{Note that Connes relied on the Hahn-Banach theorem,
exploited ultrafilters, and placed a nonconstructive entity (namely
the Dixmier trace) on the front cover of his \emph{magnum opus}; see
(Katz \& Leichtnam 2013 \cite{KL}) and (Kanovei et al.~2013
\cite{KKM}) for details.}
is in the category of dressing down a feature to look like a bug, to
reverse a known dictum from computer science slang.%
\footnote{See https://en.wikipedia.org/wiki/Undocumented\_feature}
This can be seen as follows.  The Skolem non-standard integers
$\N_{\text{Sko}}$ are known to be purely constructive; see Skolem
(1955 \cite{Sk55}) and Kanovei et al.~(2013 \cite{KKM}).  Yet they
imbed in Robinson's hypernaturals~$\N_{\text{Rob}}$:
\begin{equation}
\label{21}
\N_{\text{Sko}} \hookrightarrow \N_{\text{Rob}}.
\end{equation}
Viewing a purely constructive Skolem hypernatural
\[
         H \in \N_{\text{Sko}}\setminus \N
\]
as a member of~$\N_{\text{Rob}}$ via the inclusion~\eqref{21}, one can
apply the transfer principle to form the set
\[
         X_H = \{A \subset \N : H \in {}^\ast\!A\},
\]
where~${}^\ast\!A\subset \N_{\text{Rob}}$ is the natural extension
of~$A$.  The set $X_H$ is not measurable.  What propels the
set~$X_H\subset \mathcal{P}(\N)$ into existence is not a purported
\emph{weakness} of a nonstandard integer~$H$ itself, but rather the
remarkable \emph{strength} of both the \Los-Robinson transfer
principle and the consequences it yields.

\subsection{Williamson, complexity, and other arguments}
\label{will}

Easwaran makes a number of further critiques of hyperreal methodology.
His section 8.1, entitled ``Williamson's Argument", concerns infinite
coin tosses.  Easwaran's analysis is based on the model of a countable
sequence of coin tosses given by Williamson \cite{Wi07}.  In this
model, it is assumed that

\begin{quote}
\ldots{} for definiteness, [the coin] will be flipped once per second,
assuming that seconds from now into the future can be numbered with
the natural numbers (Easwaran 2014 \cite[section 8.1]{Ea14}).
\end{quote}
What is lurking behind this is a double assumption which, unlike other
``premises", is not made explicit by Easwaran.  Namely, he assumes
that

\begin{enumerate}
\item
a vast number of independent tests is best modeled by a temporal
arrangement thereof, rather than by a simultaneous collection; and
\item
the collection of seconds ticking away ``from now [and] into the
future" gives a faithful representation of the natural numbers.
\end{enumerate}

These two premises are not self-evident and some research
mathematicians have very different intuitions about the matter, as
much of the literature on applied nonstandard analysis (e.g.,
Albeverio et al.~1986 \cite{AFHL86}; Reder 2003 \cite{R03})
illustrates.

It seems that in Easwaran's model, an agent can choose not to flip the
coin at some seconds, thus giving rise to events like ``a coin that is
flipped starting at second 2 comes up heads on every flip".  However,
in all applications we are aware of, this additional structure used to
rule out the use of hyperreals as range of probability functions seems
not to be relevant.

Williamson and Easwaran appear to be unwilling to assume that, once
one decides to use hyperreal infinitesimals, one should also replace
the original algebra ``of propositions in which the agent has
credences" with an internal algebra of the hyperreal setting.  In
fact, such an additional step allows one to avoid both the problems
raised by Williamson's argument in his formulation using conditional
probability, and those raised by Easwaran in section 8.2 of his paper.

A possible model with hyperreal infinitesimals for an infinite
sequence of coin tosses is given by representing every event by means
of a sequence~$\{a_1, \ldots, a_N\}$, where~$a_n$ represents the
outcome of the~$n$th flip and~$N$ is a fixed hypernatural number.  In
this model, consider the events ``$a_n =$ Heads for~$n \leq N$'', that
we will denote~$H(1)$, and ``$a_n =$ Heads for~$2 \leq n \leq N$'',
that we will denote~$H(2)$.  In such a setting, events~$H(1)$ and
$H(2)$ are not isomorphic, contrary to what was argued in (Williamson
\cite[p.~3]{Wi07}).  This is due to the fact that hypernatural numbers
are an elementary extension of the natural numbers, for which the
formula~$k \not = k+1$ always holds.  Moreover, the probability of
$H(1)$ is the infinitesimal~$2^{-N}$, while the probability of~$H(2)$
is the strictly greater infinitesimal~$2^{-(N-1)}$, thus obeying the
well known rule for conditional probability.


Easwaran's section 8.4 entitled ``The complexity argument" is based on
four premises.  However, his premise 3, to the effect that ``all
physical quantities can be entirely parametrized using the standard
real numbers", is unlikely to lead to meaningful philosophical
conclusions based on ``first principles''.  This is because all
physical quantities can be entirely parametrized by the usual rational
numbers alone, due to the intrinsic limits of our capability to
measure physical quantities.  A clear explanation of this limitation
was given by Dowek.  In particular, since
\begin{quote}
a measuring instrument yields only an approximation of the measured
magnitude,
[\ldots] it is therefore impossible, except according to this
idealization, to measure more than the first digits of a physical
magnitude. [\ldots] According to this principle, this idealization of
the process of measurement is a fiction.  This suggests the idea,
reminiscent of Pythagoras' views, that Physics could be formulated
with rational numbers only.  We can therefore wonder why real numbers
have been invented and, moreover, used in Physics.  A hypothesis is
that the invention of real numbers is one of the many situations,
where the complexity of an object is increased, so that it can be
apprehended more easily.  (Dowek 2013 \cite{Do13})
\end{quote}
Related comments by Wheeler (1994 \cite[p.~308]{Whe}), Brukner \&
Zeilinger (2005 \cite[p.~59]{BZ}), and others were analyzed by Kanovei
et al.~(2013 \cite[Section~8.4]{KKM}).  See also Jaroszkiewicz (2014
\cite{Ja14}).

If all physical quantities can be entirely parametrized by using
rational numbers, there should be no compelling reason to choose the
real number system as the value range of our probability measures.
However, Easwaran is apparently comfortable with the idealisation of
exploiting a larger number system than the rationals for the value
range of probability measures.  What we argue is that the real numbers
are merely one among possible idealisations that can be used for this
purpose.  For instance, in hyperreal models for infinite sequence of
coin tosses developed by Benci, Bottazzi \& Di Nasso (2013
\cite{BBD}), all events have \emph{hyperrational} probabilities.  This
generalizes both the case of finite sequences of coin tosses, and the
Kolmogorovian model for infinite sequences of coin tosses, where a
real-valued probability is generated by applying Caratheodory's
extension theorem to the rational-valued probability measure over the
cylinder sets.

Given Easwaran's firm belief that ``the function relating credences to
the physical is not so complex that its existence is independent of
Zermelo-Fraenkel set theory" (see his section 8.4, premise 2), it is
surprising to find him suggesting that 
\begin{quote}
the surreal numbers seem more promising as a device for future
philosophers of probability to use (Easwaran 2014
\cite[Appendix~A.3]{Ea14}).
\end{quote}

However, while the construction of the surreals indeed ``is a
simultaneous generalization of Dedekind's construction of the real
numbers and von Neumann's construction of the ordinals", as observed
by Easwaran, it is usually carried out in the Von
Neumann--Bernays--G\"odel set theory (NBG) with Global Choice; see,
for instance, the ``Preliminaries'' section of (Alling 1987
\cite{Al87}).  The assumption of the Global Axiom of Choice is a
strong foundational assumption.


The construction of the surreal numbers can be performed within a
version of NBG that is a conservative extension of ZFC, but does not
need Limitation of Size (or Global Choice).  However, NBG clearly is
not a conservative extension of ZF; and if one wishes to prove certain
interesting features of the surreals one needs an even stronger
version of NBG that involves the Axiom of Global Choice.  Therefore,
the axiomatic foundation that one needs for using the surreal numbers
is at least as strong as the one needed for the hyperreals.


\subsection
{Infinity and infinitesimal: let both pretty severely alone}

At the previous turn of the century, H.~Heaton wrote:
\begin{quote}
I think I know exactly what is meant by the term zero.  But I can have
no conception either of infinity or of the infinitesimal, and I think
it would be well if mathematicians would let both pretty severely
alone (Heaton 1898 \cite[p.~225]{H1898}).
\end{quote}
Heaton's sentiment expresses an unease about a mathematical concept of
which one may have an intuitive grasp%
\footnote{The intuitive appeal of infinitesimals make them an
effective teaching tool.  The pedagogical value of teaching calculus
with infinitesimals was demonstrated in a controlled study by Sullivan
(1976 \cite{Su}).}
but which is not easily formalizable.  Heaton points out several
mathematical inconsistencies or ill-chosen terminology among the
conceptions of infinitesimals of his contemporaries.  This highlights
the brilliant mathematical achievement of a consistent ``calculus''
for infinitesimals attained through the work of Hewitt (1948
\cite{Hew48}), \Los{} (1955 \cite{Lo}), Robinson (1961 \cite{Ro61}),
and Nelson (1977 \cite{N77}), but also of their predecessors like
Fermat, Euler, Leibniz, and Cauchy, as we analyze respectively in
Sections~\ref{fermat}, \ref{lei}, \ref{euler}, and~\ref{cauchy}.

\section{Fermat's adequality}
\label{fermat}

Our interpretation of Fermat's technique is compatible with those by
\Stromholm{} (1968 \cite{Strom}) and Giusti (2009 \cite{Giu}).  It is
at variance with the interpretation by Breger (1994 \cite{Bre94}),
considered by Knobloch (2014 \cite{Kn14}) to have been refuted.

Adequality, or~\parisotes{} (parisot\=es) in the original Greek of
Diophantus,
%
%
is a crucial step in Fermat's method of finding maxima, minima,
tangents, and solving other problems that a modern mathematician would
solve using infinitesimal calculus.  The method is presented in a
series of short articles in Fermat's collected works.
The first article, {\em Methodus ad Disquirendam Maximam et
Minimam\/},
%
%
opens with a summary of an algorithm for finding the maximum or
minimum value of an algebraic expression in a variable~$A$.  For
convenience, we will write such an expression in modern functional
notation as~$f(a)$.
%
%

\subsection{Summary of Fermat's algorithm}
\label{summary}

One version of the algorithm can be broken up into six steps in the
following way:

\begin{enumerate}
\item
Introduce an auxiliary symbol~$e$, and form~$f(a+e)$;
\item
Set {\em adequal\/} the two expressions~$f(a+e) =_{\text{AD}} f(a)$
(the notation ``$=_{\text{AD}}$'' for adequality is ours, not
Fermat's);
\item
\label{cancel}
Cancel the common terms on the two sides of the adequality.  The
remaining terms all contain a factor of~$e$;
\item
\label{divide}
Divide by~$e$ (see also next step);
\item
\label{higher}
In a parenthetical comment, Fermat adds: ``or by the highest common
factor of~$e$";
\item
\label{among}
Among the remaining terms, suppress
%
%
all terms which still contain a factor of~$e$.
%
%
Solving the resulting equation for~$a$ yields the extremum of~$f$.
\end{enumerate}

In modern mathematical language, the algorithm entails expanding the
difference quotient
\[
\frac{f(a+e)-f(a)}{e}
\]
in powers of~$e$ and taking the constant term.%
\footnote{Fermat also envisions a more general technique involving
division by a higher power of~$e$ as in step~\eqref{higher}.
}
The method (leaving aside step~\eqref{higher}) is immediately
understandable to a modern reader as the elementary calculus exercise
of finding the extremum by solving the equation~$f'(a)=0$.  But the
real question is how Fermat understood this algorithm in his own
terms, in the mathematical language of his time, prior to the
invention of calculus by Barrow, Leibniz, Newton, and others.

There are two crucial points in trying to understand Fermat's
reasoning: first, the meaning of ``adequality'' in step (2), and
second, the justification for suppressing the terms involving positive
powers of~$e$ in step~\eqref{among}.  The two issues are closely
related because interpretation of adequality depends on the conditions
on~$e$.  One condition which Fermat always assumes is that~$e$ is
positive.  He did not use negative numbers in his calculations.%
\footnote{This point is crucial for our argument below using the
transverse ray.  Since Fermat is only working with positive values of
his~$e$, he only considers a ray (rather than a full line) starting at
a point of the curve.  The convexity of the curve implies an
inequality, which Fermat transforms into an \emph{adequality} without
giving much explanation of his procedure, but assuming implicitly that
the ray is tangent to the curve.  But a transverse ray would satisfy
the inequality no less than a tangent ray, indicating that Fermat is
relying on an additional piece of geometric information.  His
procedure of applying the defining relation of the curve itself, to a
point on the tangent ray, is only meaningful when the increment~$e$ is
small (see Subsection~\ref{four}).}

Fermat introduces the term \emph{adequality} in \emph{Methodus} with a
reference to Diophantus of Alexandria.  In the third article of the
series, \emph{Ad Eamdem Methodum} (\emph{Sur la M\^{e}me M\'ethode}),
he quotes Diophantus' Greek term~\parisotes, which he renders
following Xylander and Bachet, as \emph{adaequatio} or
\emph{adaequalitas} (see A.~Weil \cite[p.~28]{We84}).

\subsection{Tangent line and convexity of parabola}
\label{four}

Consider Fermat's calculation of the tangent line to the parabola (see
Fermat \cite[p.~122-123]{Fer}).  To simplify Fermat's notation, we
will work with the parabola~$y=x^2$, or
\[
\frac{x^2}{y}=1.
\]
To understand what Fermat is doing, it is helpful to think of the
parabola as a level curve of the two-variable
function~$\frac{x^2}{y}$.

Given a point~$(x,y)$ on the parabola, Fermat wishes to find the
tangent line through the point.  Fermat exploits the geometric fact
that by convexity, a point
\[
(p,q)
\]
on the tangent line lies {\em outside\/} the parabola.  He therefore
obtains an inequality equivalent in our notation to~$\frac{p^2}{q}>1$,
or~$p^2>q$.  Here~$q=y-e$, and~$e$ is Fermat's magic symbol we wish to
understand.  Thus, we obtain
\begin{equation}
\label{41}
\frac{p^2}{y-e}>1.
\end{equation}
At this point Fermat proceeds as follows:
\begin{enumerate}
\item[(i)]
\label{shalosh}
he writes down the inequality~$\frac{p^2}{y-e}>1$, or~${p^2}>{y-e}$;
\item[(ii)]
\label{arba}
he invites the reader to {\em ad\'egaler\/} (to ``adequate'');
\item[(iii)]
\label{ve}
he writes down the
adequality~$\frac{x^2}{p^2}=_{\text{AD}}^{\phantom{I}}\frac{y} {y-e}$;
\item[(iv)] 
he uses an identity involving similar triangles to substitute 
\[
\frac{x}{p}=\frac{y+r}{y+r-e}
\]
where~$r$ is the distance from the vertex of the parabola to the point
of intersection of the tangent to the parabola at~$y$ with the axis of
symmetry,
\item[ {(v)}] he cross multiplies and cancels identical terms on right
and left, then divides out by~$e$, discards the remaining terms
containing~$e$, and obtains~$y=r$ as the solution.%
\footnote{In Fermat's notation~$y=d$,~$y+r=a$. Step (v) can be
understood as requiring the
expression~$\frac{y}{y-e}-\frac{(y+r)^2}{(y+r-e)^2}$ to have a double
root at~$e=0$, leading to the solution~$y=r$ or in Fermat's
notation~$a=2r$.}
\end{enumerate}

What interests us here are steps~(i) and (ii).  How does Fermat pass
from an inequality to an adequality?  Giusti noted that
\begin{quote}
Comme d'habitude, Fermat est autant d\'etaill\'e dans les exemples
qu'il est r\'eticent dans les explications.  On ne trouvera donc
presque jamais des justifications de sa r\`egle des tangentes (Giusti
2009 \cite{Giu}).
\end{quote}
In fact, Fermat provides no explicit explanation for this step.
However, what he does is to apply the defining relation for a curve to
points on the tangent line to the curve.  Note that here the
quantity~$e$, as in~$q=y-e$, is positive: Fermat did not have the
facility we do of assigning negative values to variables.
\Stromholm~notes that Fermat
\begin{quote}
never considered negative roots, and if~$A=0$ was a solution of an
equation, he did not mention it as it was nearly always geometrically
uninteresting (\Stromholm{} 1968 \cite[p.~49]{Strom}).
\end{quote}

Fermat says nothing about considering points~$y+e$ ``on the other
side'', i.e., further away from the vertex of the parabola, as he does
in the context of applying a related but different method, for
instance in his two letters to Mersenne (see \cite[p.~51]{Strom}), and
in his letter to Br\^ulart~\cite{Fer2}.%
\footnote{This was noted by Giusti (2009 \cite{Giu}).}
Now for positive values of~$e$, Fermat's inequality~\eqref{41} would
be satisfied by a {\em transverse ray\/} (i.e., secant ray) starting
at~$(x,y)$ and lying outside the parabola, just as much as it is
satisfied by a tangent ray starting at~$(x,y)$.  Fermat's method
therefore presupposes an additional piece of information, privileging
the tangent ray over transverse rays.  The additional piece of
information is geometric in origin: he applies the defining relation
(of the curve itself) to a point on the tangent ray to the curve, a
procedure that is only meaningful when the increment~$e$ is small.


In modern terms, we would speak of the tangent line being a ``best
approximation'' to the curve for a small variation~$e$; however,
Fermat does not explicitly discuss the size of~$e$.  The procedure of
``discarding the remaining terms'' in step (v) admits of a proxy in
the hyperreal context.  Namely, it is the standard part principle (see
Section~\ref{one}).  Fermat does not elaborate on the justification of
this step, but he is always careful to speak of the \emph{suppressing}
or \emph{deleting} the remaining term in~$e$, rather than setting it
equal to zero.  Perhaps his rationale for suppressing terms in~$e$
consists in ignoring terms that don't correspond to an actual
measurement, prefiguring Leibniz's \emph{inassignable quantities}.
Fermat's inferential moves in the context of his adequality are akin
to Leibniz's in the context of his calculus; see Section~\ref{lei}.

\subsection
{Fermat, Galileo, and Wallis}

While Fermat never spoke of his~$e$ as being \emph{infinitely small},
the technique was known both to Fermat's contemporaries like Galileo
(see Bascelli 2014 \cite{Ba14}, \cite{Ba14b}) and Wallis (see Katz \&
Katz \cite[Section~24]{KK11a}) as well as Fermat himself, as his
correspondence with Wallis makes clear; see Katz, Schaps \& Shnider
(2013 \cite[Section~2.1]{KSS13}).

Fermat was very interested in Galileo's treatise \emph{De motu
locali}, as we know from his letters to Marin Mersenne dated apr/may
1637, 10~august, and 22 october 1638.  Galileo's treatment of
infinitesimals in \emph{De motu locali} is discussed by Wisan (1974
\cite[p.~292]{Wi74}) and Settle (1966 \cite{Se}).

Alexander (2014 \cite{Al14}) notes that the clerics in Rome forbade
the doctrine of the \emph{infinitely small} on 10 august 1632 (a month
before Galileo was put on trial over heliocentrism); this may help
explain why the catholic Fermat might have been reluctant to speak of
the \emph{infinitely small} explicitly.%
\footnote{See a related discussion at
http://math.stackexchange.com/questions/661999/are-infinitesimals-dangerous}

\newcommand\parisoun{{$\pi\acute\alpha\rho\iota\sigma{o}\tilde\upsilon\nu$}}

In a recent text, U.~Felgner analyzes the Diophantus problems which
exploit the method of \parisotes, and concludes that
\begin{quote}
Aus diesen Beispielen wird deutlich, dass die Verben \parisoun{} und
adaequare nicht ganz dasselbe aus\-dr\"u\-cken.  Das griechische Wort
bedeutet, der Gleichheit nahe zu sein, w\"ahrend das lateinische Wort
das Erreichender Gleichheit (sowohl als vollendeten als auch als
unvollendeten Proze{\ss}) ausdr\"uckt (Felgner 2014 \cite{Fel14}).
\end{quote}

Thus, in his view, even though the two expressions have slightly
different meanings, the Greek meaning ``being close to equality'' and
the Latin meaning ``equality which is reached (at the end of either a
finite or an infinite process),'' they both involve approximation.
Felgner goes on to consider some of the relevant texts from Fermat,
and concludes that Fermat's method has nothing to do with differential
calculus and involves only the property of an auxiliary expression
having a double zero:
\begin{quote}
Wir hoffen, deutlich gemacht zu haben, dass die fermatsche ``Methode
der Adaequatio'' gar nichts mit dem Differential-Kalk\"ul zu hat,
sondern vielmehr im Studium des Wertverlaufs eines Polynoms in der
Umgebung eines kritischen Punktes besteht, und dabei das Ziel verfolgt
zu zeigen, dass das Polynom an dieser Stelle eine doppelte Nullstelle
besitzt (ibid.)
\end{quote}
However, Felgner's conclusion is inconsistent with his own textual
analysis which indicates that the idea of approximation is present in
the methods of both Diophantus and Fermat.  As Knobloch (2014
\cite{Kn14}) notes, ``Fermat's method of adequality is not a single
method but rather a cluster of methods.''  Felgner failed to analyze
the examples of tangents to transcendental curves, such as the
cycloid, in which Fermat does \emph{not} study the order of the zero
of an auxiliary polynomial. Felgner mistakenly asserts that in the
case of the cycloid Fermat did not reveal how he thought of the
solution: ``Wie FERMATsich die L\"osung dachte, hat er nicht
verraten.'' (ibid.)  Quite to the contrary, as Fermat explicitly
stated, he applied the defining property of the curve to points on the
tangent line:
\begin{quote}
Il faut donc ad\'egaler (\`a cause de la propri\'et\'e sp\'ecifique de
la courbe qui est \`a consid\'erer sur la tangente) 
\end{quote}
(see Katz et al.~(2013 \cite{KSS13}) for more details).  Fermat's
approach involves applying the defining relation of the curve, to a
point on a \emph{tangent} to the curve.  The approach is consistent
with the idea of approximation inherent in his method, involving a
negligible distance (whether infinitesimal or not) between the tangent
and the original curve when one is near the point of tangency.  This
line of reasoning is related to the ideas of the differential
calculus.  Note that Fermat does not say anything here concerning the
multiplicities of zeros of polynomials.  As Felgner himself points
out, in the case of the cycloid the only polynomial in sight is of
first order and the increment ``$e$'' cancels out.  Fermat correctly
solves the problem by obtaining the defining equation of the tangent.

For a recent study of 17th century methodology, see the article
(Carroll et al.~2013 \cite{CDP}).

\section
{Leibniz's Transcendental law of homogeneity}
\label{lei}

In this section, we examine a possible connection between Fermat's
adequality and Leibniz's Transcendental Law of Homogeneity (TLH).
Both of them enable certain inferential moves that play parallel roles
in Fermat's and Leibniz's approaches to the problem of maxima and
minima.  Note the similarity in titles of their seminal texts:
\emph{Methodus ad Disquirendam Maximam et Minimam} (Fermat, see
Tannery \cite[pp.~133]{Tan}) and \emph{Nova methodus pro maximis et
minimis \ldots} (Leibniz 1684 \cite{Le84} in Gerhardt \cite{Ge50}).

\subsection{When are quantities equal?}

Leibniz developed the TLH in order to enable inferences to be made
between inassignable and assignable quantities.  The TLH governs
equations involving differentials.  H.~Bos interprets it as follows:
\begin{quote}
A quantity which is infinitely small with respect to another quantity
can be neglected if compared with that quantity.  Thus all terms in an
equation except those of the highest order of infinity, or the lowest
order of infinite smallness, can be discarded.  For instance,
\begin{equation}
\label{adeq2}
a+dx =a
\end{equation}
\[
dx+ddy=dx
\]
etc.  The resulting equations satisfy this [\dots] requirement of
homogeneity (Bos 1974 \cite[p.~33]{Bos} paraphrasing Leibniz 1710
\cite[p.~381-382]{Le10b}).
\end{quote}
The title of Leibniz's 1710 text is \emph{Symbolismus memorabilis
calculi algebraici et infinitesimalis in comparatione potentiarum et
differentiarum, et de lege homogeneorum transcendentali}.  The
inclusion of the transcendental law of homogeneity 
%
%
(\emph{lex homogeneorum transcendentalis})
%
%
in the title of the text attests to the importance Leibniz attached to
this law.

The ``equality up to an infinitesimal'' implied in TLH was explicitly
discussed by Leibniz in a 1695 response to Nieuwentijt, in the
following terms:

\begin{quote}
Caeterum \emph{aequalia} esse puto, non tantum quorum differentia est
omnino nulla, sed et quorum differentia est incomparabiliter parva; et
licet ea Nihil omnino dici non debeat, non tamen est quantitas
comparabilis cum ipsis, quorum est differentia (Leibniz 1695
\cite[p.~322]{Le95}) [emphasis added--authors]
\end{quote}
We provide a translation of Leibniz's Latin:
\begin{quote}
Besides, I consider to be \emph{equal} not only those things whose
difference is entirely nothing, but also those whose difference is
incomparably small: and granted that it [i.e., the difference] should
not be called entirely Nothing, nevertheless it is not a quantity
comparable to those whose difference it is.
\end{quote}

\subsection{Product rule}
How did Leibniz use the TLH in developing the calculus? The issue can
be illustrated by Leibniz's justification of the last step in the
following calculation:
\begin{equation}
\label{41c}
\begin{aligned}
d(uv) &= (u+du)(v+dv)-uv\\&=udv+vdu+du\,dv \\ & =udv+vdu.
\end{aligned}
\end{equation}
The last step in the calculation~\eqref{41c} depends on the following
inference:
\[
d(uv)=udv+vdu+dudv \quad \Longrightarrow \quad d(uv)=udv+vdu.
\]
Such an inference is an application of Leibniz's TLH.  In his 1701
text \emph{Cum Prodiisset} \cite[p.~46-47]{Le01c}, Leibniz presents an
alternative justification of the product rule (see Bos
\cite[p.~58]{Bos}).  Here he divides by~$dx$, and argues with
differential \emph{quotients} rather than differentials.  The role
played by the TLH in these calculations is similar to that played by
adequality in Fermat's work on maxima and minima.  For more details on
Leibniz, see Guillaume (2014 \cite{Gu}); Katz \& Sherry (2012
\cite{KS2}), (2013 \cite{KS1}); Sherry \& Katz \cite{SK}; Tho (2012
\cite{Th}).

\section
{Euler's Principle of Cancellation}
\label{euler}

Some of the Leibnizian formulas reappear, not surprisingly, in his
student's student Euler.  Euler's formulas like
\begin{equation}
\label{fo31}
a+dx=a,
\end{equation}
where~$a$ ``is any finite quantity'' (see Euler 1755
\cite[\S\,\S\,86,87]{Eu55}) are consonant with a Leibnizian tradition
as reported by Bos; see formula~\eqref{adeq2} above.  To explain
formulas like~\eqref{fo31}, Euler elaborated two distinct ways
(arithmetic and geometric) of comparing quantities, in the following
terms:

\begin{quote}
Since we are going to show that an infinitely small quantity is really
zero, we must meet the objection of why we do not always use the same
symbol 0 for infinitely small quantities, rather than some special
ones\ldots [S]ince we have two ways to compare them, either
\emph{arithmetic} or \emph{geometric}, let us look at the quotients of
quantities to be compared in order to see the difference.

If we accept the notation used in the analysis of the infinite,
then~$dx$ indicates a quantity that is infinitely small, so that both
$dx =0$ and~$a\,dx=0$, where~$a$ is any finite quantity.  Despite
this, the \emph{geometric} ratio~$a\,dx: dx$ is finite, namely~$a:1$.
For this reason, these two infinitely small quantities,~$dx$
and~$a\,dx$, both being equal to~$0$, cannot be confused when we
consider their ratio.  In a similar way, we will deal with infinitely
small quantities~$dx$ and~$dy$ (ibid., \S\,86, p. 51-52) [emphasis
added--the authors].
\end{quote}
Having defined the arithmetic and geometric comparisons, Euler
proceeds to clarify the difference between them as follows:
\begin{quote}
Let~$a$ be a finite quantity and let~$dx$ be infinitely small.
%
%
The arithmetic ratio of equals is clear: Since~$ndx =0$, we have
\[
a \pm ndx - a = 0.
\]
On the other hand, the geometric ratio is clearly of equals, since
\begin{equation}
\label{32b}
\frac{a \pm ndx}{a} = 1.
\end{equation}
From this we obtain the well-known rule that \emph{the infinitely
small vanishes in comparison with the finite and hence can be
neglected [with respect to it]} \cite[\S 87]{Eu55} [emphasis in the
original--the authors].
\end{quote}


Like Leibniz, Euler considers more than one way of comparing
quantities.  Euler's formula~\eqref{32b} indicates that his geometric
comparison is procedurally identical with the Leibnizian TLH.

To summarize, Euler's geometric comparision of a pair of quantities
amounts to their ratio being infinitely close to a finite quantity, as
in formula~\eqref{32b}; the same is true for TLH.  Note that one
has~$a+dx=a$ in this sense for an appreciable~$a\not=0$, but
\emph{not} for ~$a=0$ (in which case there is equality only in the
\emph{arithmetic} sense).  Euler's ``geometric'' comparison was dubbed
``the principle of cancellation'' in (Ferraro \cite[pp.~47, 48,
54]{Fe04}).

Euler proceeds to present the usual rules of infinitesimal calculus,
which go back to Leibniz, L'H\^opital, and the Bernoullis, such as
\begin{equation}
\label{fo32}
a\,dx^m + b\,dx^n=a\,dx^m
\end{equation}
provided~$m<n$ ``since~$dx^n$ vanishes compared with~$dx^m$''
(\cite[\S\,89]{Eu55}), relying on his ``geometric'' comparison.  Euler
introduces a distinction between infinitesimals of different order,
and directly \emph{computes}%
\footnote{\label{f7}Note that Euler does not ``prove that the
expression is equal to 1''; such \emph{indirect} proofs are a
trademark of the~$(\epsilon, \delta)$ approach.  Rather, Euler
\emph{directly} computes (what would today be formalized as the
\emph{standard part} of) the expression.}
a ratio of the form
\[
\frac{dx\pm dx^2}{dx}=1\pm dx=1
\]
of two particular infinitesimals, assigning the value~$1$ to it
(ibid., \S\,88).  Euler concludes:
\begin{quote}
Although all of them [infinitely small quantities] are equal to 0,
still they must be carefully distinguished one from the other if we
are to pay attention to their mutual relationships, which has been
explained through a geometric ratio (ibid., \S\,89).
\end{quote}
The Eulerian hierarchy of orders of infinitesimals harks back to
Leibniz's work (see Section~\ref{lei}).  Euler's \emph{geometric
comparision}, or ``principle of cancellation'', is yet another
incarnation of the idea at the root of Fermat's adequality and
Leibniz's Transcendental Law of Homogeneity.  For further details on
Euler see Bibiloni et al.~(2006 \cite{BVP}); Bair et al.~(2013
\cite{B11}); Reeder (2013 \cite{Re}).

\section{What did Cauchy mean by ``limit''?}
\label{cauchy}

Laugwitz's detailed study of Cauchy's methodology places it squarely
in the B-track (see Section~\ref{two}).  In conclusion, Laugwitz
writes:
\begin{quote}
The influence of Euler should not be neglected, with regard both to
the organization of Cauchy's texts and, in particular, to the
fundamental role of infinitesimals (Laugwitz 1987
\cite[p.~273]{Lau87}).
\end{quote}
Thus, in his 1844 text \emph{Exercices d'analyse et de physique
math\'ematique}, Cauchy wrote:

\begin{quote}
\ldots si, les accroissements des variables \'etant suppos\'es
infiniment petits, on n\'eglige, vis-\`a-vis de ces accroissements
consid\'er\'es comme infiniment petits du premier ordre, les
infiniment petits des ordres sup\'erieurs au premier, les nouvelles
\'equations deviendront lin\'eaires par rapport aux accroissements
petits des variables.  Leibniz et les premiers g\'eom\`etres qui se
sont occup\'es de l'analyse infinit\'esimale ont appel\'e
\emph{diff\'erentielles} des variables leurs accroissements infiniment
petits, \ldots{} (Cauchy 1844 \cite[p.~5]{Ca44}).
\end{quote}

Two important points emerge from this passage.  First, Cauchy
specifically speaks about \emph{neglecting} (``on n\'eglige'') higher
order terms, rather than setting them equal to zero.  This indicates a
similarity of procedure with the Leibnizian TLH (see
Section~\ref{lei}).  Like Leibniz and Fermat before him, Cauchy does
not set the higher order terms equal to zero, but rather ``neglects"
or discards them.  Furthermore, Cauchy's comments on Leibniz deserve
special attention.

\subsection{Cauchy on Leibniz} 

By speaking matter-of-factly about the infinitesimals of Leibniz
specifically, Cauchy reveals that his (Cauchy's) infinitesimals are
consonant with Leibniz's.  This is unlike the differentials where
Cauchy adopts a different approach.

On page 6 of the same text, Cauchy notes that the notion of derivative
\begin{quote}
repr\'esente en r\'ealit\'e la \emph{limite} du rapport entre les
accrossements infiniment petits et simultan\'es de la fonction et de
la variable (ibid., p.~6) [emphasis added--the authors]
\end{quote}
The same definition of the derivative is repeated on page 7, this time
emphasized by means of italics.  Note Cauchy's emphasis on the point
that the derivative is not a ratio of infinitesimal increments, but
rather the \emph{limit} of the ratio.  

Cauchy's use of the term ``limit" as applied to a ratio of
infinitesimals in this context may be unfamiliar to a modern reader,
accustomed to taking limits of \emph{sequences} of real numbers.  Its
meaning is clarified by Cauchy's discussion of ``neglecting" higher
order infinitesimals in the previous paragraph on page 5 cited above.
Cauchy's use of ``limit" is procedurally identical with the Leibnizian
TLH, and therefore similarly finds its modern proxy as extracting the
standard part out of the ratio of infinitesimals.


On page 11, Cauchy chooses infinitesimal increments~$\Delta s$
and~$\Delta t$, and writes down the equation
\begin{equation}
\label{51b}
\frac{ds}{dt}= \text{lim.} \; \frac{\Delta s}{\Delta t} .
\end{equation}
Modulo replacing Cauchy's symbol ``lim." by the modern one ``st" or
``sh'', Cauchy's formula~\eqref{51b} is identical to the formula
appearing in any textbook based on the hyperreal approach, expressing
the derivative in terms of the standard part function (shadow).

\subsection{Cauchy on continuity}

On page 17 of his 1844 text, Cauchy gives a definition of continuity
in terms of infinitesimals (an infinitesimal~$x$-increment necessarily
produces an infinitesimal~$y$-increment).  His definition is nearly
identical with the italicized definition that appeared on page 34 in
his \emph{Cours d'Analyse} (Cauchy 1821 \cite{Ca21}), 23 years
earlier, when he first introduced the modern notion of continuity.  We
will use the translation by Bradley \& Sandifer (2009 \cite{BS09}).
In his Section~2.2 entitled \emph{Continuity of functions}, Cauchy
writes:
\begin{quote}
If, beginning with a value of~$x$ contained between these limits, we
add to the variable~$x$ an infinitely small increment~$\alpha$, the
function itself is incremented by the difference~$f(x+\alpha)-f(x)$.
\end{quote}
Cauchy goes on to state that 
\begin{quote}
the function~$f(x)$ is a continuous function of~$x$ between the
assigned limits if, for each value of~$x$ between these limits, the
numerical value of the difference~$f(x+\alpha)-f(x)$ decreases
indefinitely with the numerical value of~$\alpha$.
\end{quote}
He then proceeds to provide an italicized definition of continuity in
the following terms:
\begin{quote}
\emph{the function~$f(x)$ is continuous with respect to~$x$ between
the given limits if, between these limits, an infinitely small
increment in the variable always produces an infinitely small
increment in the function itself.}
\end{quote}
In modern notation, Cauchy's definition can be stated as follows.
Denote by~$\overset{\halo}{x}$ the \emph{halo} of~$x$, i.e., the
collection of all points infinitely close to~$x$.  Then~$f$ is
continuous at~$x$ if
\begin{equation}
\label{42}
f\left(\overset{\halo}{x}\right)\subset \overset{\halobig}{f(x)}.
\end{equation}
Most scholars hold that Cauchy never worked with a pointwise
definition of continuity (as is customary today) but rather required a
condition of type~\eqref{42} to hold in a range (``between the given
limits'').  It is worth recalling that Cauchy never gave an~$\epsilon,
\delta$ \emph{definition} of either limit or continuity (though
($\epsilon,\delta)$-type \emph{arguments} occasionally do appear in
Cauchy).  It is a widespread and deeply rooted misconception among
both mathematicians and those interested in the history and philosophy
of mathematics that it was Cauchy who invented the modern
$(\epsilon,\delta)$ definitions of limit and continuity; see, e.g.,
Colyvan \& Easwaran (2008 \cite[p.~88]{CE}) who err in attributing the
formal~$(\epsilon,\delta)$ definition of continuity to Cauchy.  That
this is not the case was argued by B\l{}aszczyk et al.~(2013
\cite{BKS}); Borovik et al.~(2012 \cite{BK}); Katz \& Katz (2011
\cite{KK11b}); Nakane (2014 \cite{Na14}); Tall et al.~(2014
\cite{TK}).

\section{Modern formalisations: a case study}

To illustrate the use of the standard part in the context of the
hyperreal field extension of~$\R$, we will consider the following
problem on divergent integrals.  The problem was recently posed at SE,
and is reportedly due to S.~Konyagin.%
\footnote{http://math.stackexchange.com/questions/408311/improper-integral-diverges}
The solution exploits the technique of a monotone rearrangement~$g$ of
a function~$f$, shown by Ryff to admit a measure-preserving map
$\phi:[0,1]\to[0,1]$ such that~$f=g\circ\phi$.  In general there is no
``inverse''~$\psi$ such that~$g=f\circ\psi$; however, a hyperreal
enlargement enables one to construct a suitable (internal) proxy for
such a~$\psi$, so as to be able to write~$g=\text{st} (f\circ\psi)$;
see formula~\eqref{71b} below.

\begin{theorem}
\label{se}
Let~$f$ be a real-valued function continuous on~$[0,1]$.  Then there
exists a number~$a$ such that the integral
\begin{equation}
\label{61}
\int_0^1\frac 1 {|f(x)-a|}\, dx
\end{equation}
diverges.
\end{theorem}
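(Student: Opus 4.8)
The plan is to pass from $f$ to its monotone rearrangement, where a.e.\ differentiability is available for free, and then to transfer the resulting divergence back to $f$ by equimeasurability. Concretely, let $g\colon[0,1]\to\R$ be the decreasing rearrangement of $f$. By construction $f$ and $g$ are equimeasurable, so they carry Lebesgue measure on $[0,1]$ to one and the same Borel law on the compact interval $[\min f,\max f]$; applying the change-of-variables formula for this common law to the nonnegative Borel function $t\mapsto 1/|t-a|$ (extended by $+\infty$ at $t=a$) gives
\[
\int_0^1\frac{dx}{|f(x)-a|}\;=\;\int_0^1\frac{ds}{|g(s)-a|}\qquad\text{in }[0,+\infty]
\]
for every real $a$. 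So it is enough to produce one $a$ making the right-hand integral infinite. This identity is all one needs in place of the reverse reparametrization $g=\operatorname{st}(f\circ\psi)$ mentioned above: we use only that $f$ and $g$ induce the same law, and Ryff's measure-preserving $\phi$ with $f=g\circ\phi$ would serve equally well at this point.

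Next I would choose $a$ by cases. If $g$ is constant, with value $c$, on some nondegenerate subinterval $[p,q]\subseteq[0,1]$ --- equivalently, if $f$ attains some value $c$ on a set of positive measure, which also covers a constant $f$ --- then $1/|g(s)-c|\equiv+\infty$ on $[p,q]$ and the integral diverges with $a=c$. Otherwise $g$ is injective and hence strictly monotone, so by Lebesgue's theorem it has a finite derivative at almost every point of $(0,1)$. Fix such a point $s_0$, put $a:=g(s_0)$, pick $C>|g'(s_0)|$, and choose $\delta>0$ with $[s_0-\delta,s_0+\delta]\subseteq(0,1)$ small enough that $|g(s)-g(s_0)|\le C|s-s_0|$ there. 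Since $g$ is injective, $g(s)\ne a$ for $s\ne s_0$, and
\[
\int_0^1\frac{ds}{|g(s)-a|}\;\ge\;\int_{s_0-\delta}^{s_0+\delta}\frac{ds}{|g(s)-g(s_0)|}\;\ge\;\frac1C\int_{s_0-\delta}^{s_0+\delta}\frac{ds}{|s-s_0|}\;=\;+\infty .
\]
In both cases the desired $a$ exists, and by the displayed identity the same $a$ works for $f$.

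The only substantive input is Lebesgue's a.e.-differentiability theorem for monotone functions; equimeasurability and the divergence of $\int dh/h$ at $0$ are routine. I expect the real obstacle to be conceptual rather than technical: one must rearrange \emph{before} differentiating. A continuous function can be nowhere differentiable, so there need be no point at which $f$ crosses a level at a bounded rate; but its rearrangement $g$, being monotone, does so at almost every point, and equimeasurability then transports the conclusion back. A subsidiary point to get right is that the chosen level $a=g(s_0)$ must be attained by $g$ at a single point, so that the singularity of $1/|g(s)-a|$ is genuinely of the non-integrable type $1/|s-s_0|$ rather than merely large --- which is precisely what the injectivity alternative arranges. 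The argument in fact yields such an $a$ for almost every value in the range of $f$, and uses continuity of $f$ only to know it is measurable and bounded.
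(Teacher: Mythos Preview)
Your proposal is correct and follows essentially the same route as the paper: pass to the decreasing rearrangement~$g$, use equimeasurability to equate the two integrals, invoke Lebesgue's a.e.\ differentiability of monotone functions to find a point~$s_0$ where~$g$ is differentiable, and set~$a=g(s_0)$ so that~$|g(s)-a|$ is dominated by a linear term near~$s_0$. Your explicit case split (constant interval versus injective~$g$) is a minor refinement the paper leaves implicit, and you forgo the paper's hyperreal construction of the rearrangement in Section~\ref{54}, but the analytic core is the same.
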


A proof can be given in terms of a monotone rearrangement of the
function (see Hardy et al.~\cite{HLP}).  We take a decreasing
rearrangement~$g(x)$ of the function~$f(x)$.  If~$f$ is continuous,
then the function~$g(x)$ will also be continuous.  If~$f$ is not
constant on any set of positive measure, one can construct~$g$ by
setting
\begin{equation}
\label{62}
g = m^{-1} \quad \text{where} \quad m(y)=\text{meas}\{x:f(x)>y\}.
\end{equation}

Ryff (1970 \cite{Ry}) showed that there exists a measure-preserving
transformation%
\footnote{However, see Section~\ref{54} for a hyperfinite approach
avoiding measure theory altogether.}
$\phi : [0,1] \to [0,1]$ that relates~$f$ and~$g$ as follows:
\begin{equation}
f(x)=g\circ \phi(x)
\end{equation}
Finding a map~$\psi$ such that~$g(x)=f\circ\psi(x)$ is in general
impossible (see Bennett \& Sharpley \cite[p.~85, example~7.7]{BS} for
a counterexample).
%
%
This difficulty can be circumvented using a hyperfinite rearrangement
(see Section~\ref{54}).  By measure preservation, we have
\[
\int_0^1 |f(x)-a|^{-1}\,dx =\int_0^1 |g(x)-a|^{-1}\,dx
\]
(for every~$a$).%
\footnote{\label{trunc}Here one needs to replace the function
$|f(x)-a|^{-1}$ by the family of its truncations~$\min \left(C,
|f(x)-a|^{-1}\right)$, and then let~$C$ increase without bound.}

To complete the proof of Theorem~\ref{se}, apply the result that every
monotone function is a.e.~differentiable.%
\footnote{In fact, one does not really need to use the result that
monotone functions are a.e.~differentiable.  Consider the convex hull
in the plane of the graph of the monotone function~$g(x)$, and take a
point where the graph touches the boundary of the convex hull (other
than the endpoints~$0$ and~$1$).  Setting~$a$ equal to
the~$y$-coordinate of the point does the job.}
Take a point~$p\in[0,1]$ where the function~$g$ is differentiable.
Then the number~$a=g(p)$ yields an infinite integral~\eqref{61}, since
the difference~$|g(x)-a|$ can be bounded above in terms of a linear
expression.%
\footnote{Namely, for~$x$ near such a point~$p$, we
have~$|g(x)-a|\leq(|g'(p)|+1)|x-p|$,
hence~$\frac{1}{|g(x)-a|}\geq\frac{1}{(|g'(p)|+1)|x-a|}$, yielding a
lower bound in terms of a divergent integral.}
%


\section{A combinatorial approach to decreasing rearrangements}
\label{54}

The existence of a decreasing rearrangement of a function~$f$
continuous on~$[0,1]$ admits an elegant proof in the context of its
hyperreal extension~$^{\ast}\!f$, which we will continue to denote
by~$f$.

We present a combinatorial argument showing that the decreasing
rearrangement obeys the same modulus of uniformity as the original
function.%
\footnote{\label{mou}A function~$f$ on~$[0,1]$ is said to satisfy a
modulus of uniformity~$\mu(n)>0, n\in\N$, if~$\forall n\in\N\;\forall
p,q\in[0,1]
\left(|p-q|\leq\mu(n)\rightarrow|f(p)-f(q)|\leq\frac1n\right)$.}
The argument actually yields an independent construction of the
decreasing rearrangement (see Proposition~\ref{72}) that avoids
recourse to measure theory.  It also yields an ``inverse up to an
infinitesimal,''~$\psi$ (see formula~\eqref{71b}), to the
function~$\phi$ such that~$f=g\circ\phi$.  For a recent application of
combinatorial arguments in a hyperreal framework, see Benci et
al.~(2013 \cite{BBD}).

In passing from the finite to the continuous case of rearrangements,
Bennett and Sharpley \cite{BS} note that 
\begin{quote}
nonnegative sequences~$(a_1,a_2,\ldots,a_n)$ and
$(b_1,b_2,\ldots,b_n)$ are equimeasurable if and only if there is a
permutation
$\sigma$ of~$\{1,2,\ldots,n\}$ such that~$b_i=a_{\sigma(i)}$
for~$i=1,2,\ldots,n$. \ldots{} The notion of permutation is no longer
available in this context [of continuous measure spaces] and is
replaced by that of a ``measure-preserving transformation'' (Bennett
and Sharpley 1988 \cite[p.~79]{BS}).
\end{quote}
We show that the hyperreal framework allows one to continue working
with combinatorial ideas, such as the ``inverse'' function~$\psi$, in
the continuous case as well.

Let~$H\in{}^{\ast}\N\setminus\N$, let~$p_i=\frac{i}{H}$ for
$i=1,2,\ldots,H$.  By the Transfer Principle (see e.g., Davis
\cite{Da77}; Herzberg \cite{Her}; Kanovei \& Reeken \cite{kr}), the
nonstandard domain of internal sets satisfies the same basic laws as
the usual, ``standard'' domain of real numbers and related objects.
Thus, as for finite sets, there exists a permutation~$\psi$ of the
hyperfinite grid
\begin{equation}
\label{grid}
G_H=\{ p_1, \ldots, p_H \}
\end{equation}
by decreasing value of~$f(p_i)$ (here~$f(\psi(p_1))$ is the maximal
value).  We assume that equal values are ordered lexicographically so
that~$ \text{if\;} f(p_i)=f(p_j) \text{\;with\;} i<j \text{\;then\;}
\psi(p_i)<\psi(p_j)$.  Hence we obtain an internal function
\begin{equation}
\label{71b}
\hat g(p_i) = f(\psi(p_i)), \quad i=1,\ldots,H.
\end{equation}
Here~$\hat g$ is (perhaps nonstrictly) decreasing on the grid~$G_H$
of~\eqref{grid}.  The internal sequences~$(f(p_i))$ and~$(\hat
g(p_i))$, where~$i=1,\ldots,H$, are equinumerable in the sense above.

\begin{proposition}
\label{72}
Let~$f$ be an arbitrary continuous function.  Then there is a standard
continuous real function~$g(x)$ such that~$g({\rm st}(p_i))={\rm
st}(\hat g(p_i))$ for all~$i$, where~${\rm st}(y)$ denotes the
standard part of a hyperreal~$y$.
\end{proposition}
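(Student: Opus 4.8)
The plan is to construct $g$ by taking standard parts of the internal values $\hat g(p_i)$, once $\hat g$ has been shown to be microcontinuous on the grid $G_H$. Since $f$ is continuous on the compact interval $[0,1]$, it is uniformly continuous and bounded; I fix a bound $|f|\leq M$ and a modulus of uniformity $\mu:\N\to\R_{>0}$, so that $|p-q|\leq\mu(n)$ implies $|f(p)-f(q)|\leq\frac1n$ for all $p,q\in[0,1]$, $n\in\N$. By transfer the same holds for ${}^{\ast}\!f$ on ${}^{\ast}[0,1]$, with ${}^{\ast}\mu(n)=\mu(n)$ for standard $n$; in particular $|\hat g(p_i)|=|f(\psi(p_i))|\leq M$, so $\st(\hat g(p_i))$ is defined. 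I would then \emph{define} $g(x):=\st(\hat g(p_i))$ for $x\in[0,1]$, where $p_i$ is any grid point with $p_i\approx x$ (such an $i$ exists, e.g.\ $i=\max(1,\lceil xH\rceil)$). Both the well-definedness of $g$ and its continuity will follow from a single estimate on $\hat g$.

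That estimate is the combinatorial fact that passing to a decreasing rearrangement does not enlarge the modulus of uniformity: for every $N\in\N$, every $v_1,\dots,v_N\in\R$, every integer $D\leq N$, and every $\varepsilon>0$, if $|v_k-v_l|\leq\varepsilon$ whenever $|k-l|\leq D$, then the decreasing rearrangement $w_1\geq\dots\geq w_N$ of $(v_k)$ satisfies $|w_i-w_j|\leq\varepsilon$ whenever $|i-j|\leq D$. I would prove this for finite $N$ and transfer it to the hyperfinite sequence $(f(p_i))_{i=1}^{H}$, whose decreasing rearrangement is precisely $(\hat g(p_i))_{i=1}^{H}$. With $D=D_n:=\lfloor H\mu(n)\rfloor$ one gets, for each standard $n$: $|i-j|\leq D_n$ implies $|\hat g(p_i)-\hat g(p_j)|\leq\frac1n$. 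Hence $p_i\approx p_j$ (i.e.\ $|i-j|/H$ infinitesimal) forces $|i-j|\leq D_n$ for every standard $n$, so $\hat g(p_i)\approx\hat g(p_j)$; this microcontinuity makes $g$ well defined and gives $g(\st(p_i))=\st(\hat g(p_i))$ for all $i$. Similarly, for standard $x,y\in[0,1]$ with $|x-y|<\mu(n)$, picking $p_i\approx x$ and $p_j\approx y$ yields $|p_i-p_j|<\mu(n)$, hence $|i-j|\leq D_n$ and $|g(x)-g(y)|\leq\st(|\hat g(p_i)-\hat g(p_j)|)\leq\frac1n$; so $g$ inherits the modulus $\mu$ and is in particular continuous.

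The main obstacle is the combinatorial fact itself, which I would prove by contradiction on two consecutive blocks. Assume $w_i-w_{i+D}=d$ with $a:=w_i>b:=w_{i+D}$. A counting argument gives $\#\{k:b<v_k<a\}\leq D-1$, since there are at least $i$ indices with $v_k\geq a$ and at most $i+D-1$ with $v_k>b$. Because $i\geq1$ and $i+D\leq N$, there is an index with $v_k\geq a$ and an index with $v_k\leq b$; a discrete intermediate-value argument along consecutive indices between them produces indices $k',k''$ with $v_{k'}\geq a$, $v_{k''}\leq b$, and all strictly-intermediate indices belonging to $\{k:b<v_k<a\}$, whence $|k'-k''|\leq D$. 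The hypothesis then gives $|v_{k'}-v_{k''}|\leq\varepsilon$, while $v_{k'}-v_{k''}\geq a-b=d$, so $d\leq\varepsilon$; thus $w_i-w_{i+D}\leq\varepsilon$, and monotonicity of $(w_i)$ disposes of the general case. The only delicate points are the bookkeeping with repeated values, handled by the same lexicographic tie-breaking used to define $\psi$, and the two edge conditions $i\geq1$, $i+D\leq N$ that guarantee the nonempty level sets needed for the intermediate-value step.
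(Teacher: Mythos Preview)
Your proof is correct and follows essentially the same route as the paper's: the combinatorial core---finding a pair $k',k''$ from the level sets $\{k:v_k\ge a\}$ and $\{k:v_k\le b\}$ with all intermediate indices lying in the small set $\{k:b<v_k<a\}$---is exactly the paper's argument (phrased there as choosing $m\in A$, $n\in B$ with $|m-n|$ minimal, where $A=\{k:f(p_k)\ge g_i\}$ and $B=\{k:f(p_k)\le g_j\}$). Your packaging as a finite lemma about preservation of the modulus of uniformity, followed by transfer, is a slightly more explicit version of what the paper does in a single pass in the hyperfinite setting; the paper also records the modulus-of-uniformity conclusion, in a footnote.
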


\begin{proof}
Let~$g_i=\hat g(p_i)$.  We claim that~$\hat g$ is S-continuous
(microcontinuous), i.e., for each pair~$i,j=1,...,H$, if~$p_i - p_j$
is infinitesimal then so is~$\hat g(p_i) - \hat g(p_j)$.  To prove the
claim, we will prove the following stronger fact:

\begin{quote}
for every~$i<j$ there are~$m<n$ such that~$n-m \leq j-i$ and
$|f(p_m)-f(p_n)| \geq g_i-g_j$.
\end{quote}
The sets~$A=\{k:f(p_k) \geq g_i\}$ and~$B=\{k: f(p_k) \leq g_j\}$ are
nonempty and there are at most~$j-i-1$ points which are not in~$A\cup
B$.  Let~$m \in A$ and~$n \in B$ be such that~$|m-n|$ is minimal.  All
integers between~$m$ and~$n$ are not in~$A\cup B$.  Hence there are at
most~$j-i-1$ such integers, and therefore~$|n-m| \leq j-i$.  By
definition of~$A$ and~$B$, we obtain~$|f(p_n)-f(p_m)| \geq g_i-g_j$,
which proves the claim.  Thus~$\hat g$ is indeed S-continuous.

This allows us to define, for any standard~$x\in [0,1]$, the
value~$g(x)$ to be the standard part of the hyperreal~$g_i$ for any
hyperinteger~$i$ such that~$p_i$ is infinitely close to~$x$, and
then~$g$ is a continuous%
\footnote{The argument shows in fact that the modulus of uniformity of
$g$ is bounded by that of~$f$; see footnote~\ref{mou}.}
and (non-strictly) monotone real function equal to the decreasing
rearrangement~$g=m^{-1}$ of~\eqref{62}.
\end{proof}

The hyperreal approach makes it possible to solve Konyagin's problem
without resorting to standard treatments of decreasing rearrangements
which use measure theory.  Note that the rearrangement defined by the
internal permutation~$\psi$ preserves the integral of~$f$ (as well as
the integrals of the truncations of~$|f(x)-a|^{-1}$), in the following
sense.  The \emph{right-hand} Riemann sums satisfy
\begin{equation}
\label{73}
\sum_{i=1}^{H} f(p_i)\Delta x = \sum_{i=1}^{H} f(\psi(p_i)) \Delta x =
 \sum_{i=1}^{H} \hat g(p_i) \Delta x,
\end{equation}
where~$\Delta x = \frac{1}{H}$.  Thus~$\psi$ transforms a hyperfinite
Riemann sum of~$f$ into a hyperfinite Riemann sum of~$\hat g$.
Since~$\int_0^1 f(x)dx=\st\left(\sum_{i=1}^{H}f(p_i)\Delta x\right)$
and~$g(\st(p_i))=\st\left(\hat g(p_i)\right)$, we conclude that~$f$
and~$g$ have the same integrals, and similarly for the integrals
of~$|f(x)-a|^{-1}$; see footnote~\ref{trunc}.

The first equality in \eqref{73} holds automatically by the transfer
principle even though~$\psi$ is an infinite permutation.  (Compare
with the standard situation where changing the order of summation in
an infinite sum generally requires further justification.)  This
illustrates one of the advantages of the hyperreal approach.

\section{Conclusion}

We have critically reviewed several common misrepresentations of
hyperreal number systems, not least in relation to their alleged
non-constructiveness, from a historical, philosophical, and
set-theoretic perspective.  In particular we have countered some of
Easwaran's recent arguments against the use of hyperreals in formal
epistemology.  A hyperreal framework enables a richer syntax better
suited for expressing proxies for procedural moves found in the work
of Fermat, Leibniz, Euler, and Cauchy.  Such a framework sheds light
on the internal coherence of their procedures which have been often
misunderstood from a whiggish post-Weierstrassian perspective.

\section*{Acknowledgments}

The work of Vladimir Kanovei was partially supported by RFBR grant
13-01-00006.  M.~Katz was partially funded by the Israel Science
Foundation grant no.~1517/12.  We are grateful to Thomas Mormann and
to the anonymous referee for helpful suggestions, and to Ivor
Grattan-Guinness for contributing parts of the introduction.


\begin{thebibliography}{ABI}


\bibitem{AFHL86} Albeverio, S.; H\o{}egh-Krohn, R.; Fenstad, J.;
Lindstr\o{}m, T.  Nonstandard Methods in Stochastic Analysis and
Mathematical Physics.  Pure and Applied Mathematics, 122.  Academic
Press, Inc., Orlando, FL, 1986.

\bibitem{Al14} Alexander, A.  Infinitesimal: How a Dangerous
Mathematical Theory Shaped the Modern World.  Farrar, Straus and
Giroux, 2014.


\bibitem{Al87} Alling, N.  Foundations of Analysis over Surreal Number
Fields. North-Holland Mathematical Library, 1987.


\bibitem{A76} Anderson, R. A non-standard representation for Brownian
motion and It\^o integration.  \emph{Israel Journal of Mathematics}
\textbf{25} (1976), no.~1-2, 15--46.
 



\bibitem{Ar} Arthur, R.  Leibniz's syncategorematic infinitesimals.
\emph{Arch. Hist. Exact Sci.}  \textbf{67} (2013), no.~5, 553--593.

\bibitem{B11} Bair, J.; B\l{}aszczyk, P.; Ely, R.; Henry, V.; Kanovei,
V.; Katz, K.; Katz, M.; Kutateladze, S.; McGaffey, T.; Schaps, D.;
Sherry, D.; Shnider, S.  Is mathematical history written by the
victors?  \emph{Notices of the American Mathematical Society}
\textbf{60} (2013) no.~7, 886-904.

See http://www.ams.org/notices/201307/rnoti-p886.pdf 

and http://arxiv.org/abs/1306.5973

\bibitem{Ba14} Bascelli, T.  Galileo's quanti: understanding
infinitesimal magnitudes.  \emph{Arch. Hist. Exact Sci.} \textbf{68}
(2014), no.~2, 121--136.

\bibitem{Ba14b} Bascelli, T.  Infinitesimal issues in Galileo's theory
of motion.  \emph{Revue Roumaine de Philosophie} \textbf{58} (2014),
no.~1, 23-41.


Tiziana Bascelli, "Infinitesimal Issues in Galileo's Theory of
Motion", in Rev. Roum. Philosophie, 58, 





\bibitem{Be06} Bell, J.  The Continuous and the Infinitesimal in
Mathematics and Philosophy.  Polimetrica, 2006.

\bibitem{Be65} Benacerraf, P.  What numbers could not be.
\emph{Philos. Rev.}  \textbf{74} (1965), 47--73.


\bibitem{BBD} Benci, V.; Bottazzi E.; Di Nasso, M.  Elementary
numerosity and measures, preprint (2013).

\bibitem{BS} Bennett, C.; Sharpley, R.  Interpolation of operators.
\emph{Pure and Applied Mathematics} \textbf{129}.  Academic Press,
Boston, MA, 1988.

\bibitem{BVP} Bibiloni, L.; Viader, P.; Parad\'\i{}s, J.  On a series
of Goldbach and Euler.  \emph{Amer. Math. Monthly} \textbf{113}
(2006), no.~3, 206--220.

\bibitem{Bi68} Bishop, E.  Mathematics as a numerical language.  1970
Intuitionism and Proof Theory (Proc. Conf., Buffalo, N.Y., 1968)
pp. 53--71.  North-Holland, Amsterdam.

\bibitem{BKS} B\l{}aszczyk, P.; Katz, M.; Sherry, D.  Ten
misconceptions from the history of analysis and their debunking.
\emph{Foundations of Science}, \textbf{18} (2013), no.~1, 43-74.  See
http://dx.doi.org/10.1007/s10699-012-9285-8

and http://arxiv.org/abs/1202.4153

%
%

\bibitem{BJK} Borovik, A.; Jin, R.; Katz, M.  An integer construction
of infinitesimals: Toward a theory of Eudoxus hyperreals.  \emph{Notre
Dame Journal of Formal Logic} \textbf{53} (2012), no.~4, 557-570.  See
http://dx.doi.org/10.1215/00294527-1722755 and
http://arxiv.org/abs/1210.7475


\bibitem{BK} Borovik, A.; Katz, M.  Who gave you the
Cauchy--Weierstrass tale?  The dual history of rigorous calculus.
\emph{Foundations of Science} \textbf{17} (2012), no.~3, 245-276.  See
http://dx.doi.org/10.1007/s10699-011-9235-x

and http://arxiv.org/abs/1108.2885



\bibitem{Bos} Bos, H. J. M.  Differentials, higher-order differentials
and the derivative in the Leibnizian calculus.  {\em Arch. History
Exact Sci.\/} \textbf{14} (1974), 1--90.

\bibitem{Bo} Bos, H. J. M.; Bunn, R.; Dauben, J.; Grattan-Guinness,
I.; Hawkins, T.; Pedersen, K. M.  From the calculus to set theory,
1630--1910.  An introductory history.  Edited by I. Grattan-Guinness.
Gerald Duckworth \& Co. Ltd., London, 1980.


\bibitem{BS09} Bradley, R.; Sandifer, C.  Cauchy's \emph{Cours
d'analyse}.  An annotated translation.  Sources and Studies in the
History of Mathematics and Physical Sciences.  Springer, New York,
2009.


\bibitem{Bre94} Breger, H.  The mysteries of adaequare: a vindication
of Fermat.  \emph{Arch. Hist. Exact Sci.} \textbf{46} (1994), no.~3,
193--219.

\bibitem{BZ} Brukner, \v C.; Zeilinger, A.: Quantum physics as a
science of information, in Quo vadis quantum mechanics?, 47-61,
\emph{Frontiers Collection}, Springer, Berlin, 2005.


\bibitem{CDP} Carroll, M.; Dougherty, S.; Perkins, D.  Indivisibles,
Infinitesimals and a Tale of Seventeenth-Century Mathematics.
\emph{Mathematics Magazine} \textbf{86} (2013), no.~4, 239--254.

\bibitem{Ca21} Cauchy, A. L.  \emph{Cours d'Analyse de L'Ecole Royale
Polytechnique.  Premi\`ere Partie.  Analyse alg\'ebrique}.  Paris:
Imprim\'erie Royale, 1821.  Online at

{\tiny http://books.google.com/books?id=\_mYVAAAAQAAJ\&dq=cauchy\&lr=\&source=gbs\_navlinks\_s}

\bibitem{Ca44} Cauchy, A. L.  Exercices d'analyse et de physique
math\'ematique (vol.~3).  Paris, Bachelier, 1844.

\bibitem{Co66} Cohen, P.  Set theory and the continuum hypothesis.
W. A. Benjamin, New York-Amsterdam, 1966.

\bibitem{CE} Colyvan, M.; Easwaran, K.  Mathematical and physical
continuity.  \emph{Australas. J. Log.} \textbf{6} (2008), 87--93.


\bibitem{Da77} Davis, M.  Applied nonstandard analysis.  Pure and
Applied Mathematics.  Wiley-Interscience [John Wiley \& Sons], New
York-London-Sydney, 1977.  Reprinted: Dover, NY, 2005, see \hfil
\break http://store.doverpublications.com/0486442292.html

\bibitem{Do13} Dowek, G.  Real numbers, chaos, and the principle of a
bounded density of information.  Invited paper at International
Computer Science Symposium in Russia, 2013.  See
https://who.rocq.inria.fr/Gilles.Dowek/Publi/csr.pdf

\bibitem{Ea} Earman, J.  Infinities, infinitesimals, and indivisibles:
the Leibnizian laby\-rinth. \emph{Studia Leibnitiana} \textbf{7}
(1975), no.~2, 236--251.


\bibitem{Ea14} Easwaran, K.  Regularity and hyperreal credences.
\emph{Philosophical Review} \textbf{123} (2014), No.~1, 1--41.



\bibitem{Eh06} Ehrlich, P.  The rise of non-Archimedean mathematics
and the roots of a misconception. I. The emergence of non-Archimedean
systems of magnitudes.  {\em Arch. Hist. Exact Sci.} \textbf{60}
(2006), no.~1, 1--121.


\bibitem{E04} Elga, A.  Infinitesimal chances and the laws of nature.
\emph{Australasian Journal of Philosophy} \textbf{82} (2004), no.~1,
67-76.



\bibitem{ed} Euler, L.  Institutiones Calculi Differentialis.  SPb,
  1755.


\bibitem{Eu55} Euler, L.  Foundations of Differential Calculus.
English translation of Chapters 1--9 of (\cite{ed}) by D.\,Blanton,
Springer, N.Y., 2000.

\bibitem{FL} Feferman, S.; Levy, A.  Independence results in set
theory by Cohen's method II.  \emph{Notices Amer. Math. Soc.}
\textbf{10} (1963), 593.

\bibitem{Fel14} Felgner, U.  Der Begriff der "Angleichung"
(\parisotes, adaequatio) bei Diophant und Fermat (2014), preprint.


\bibitem{Fer} Fermat, P.  M\'ethode pour la recherche du maximum et du
minimum. p. 121-156 in Tannery's edition \cite{Tan3}.

\bibitem{Fer2} Fermat, P.  Letter to Br\^ulart.  Oeuvres, Vol. 5,
pp.~120-125.

\bibitem{Fe04} Ferraro, G.  Differentials and differential
coefficients in the Eulerian foundations of the calculus.
\emph{Historia Mathematica} \textbf{31} (2004), no.~1, 34--61.  See
http://dx.doi.org/10.1016/S0315-0860(03)00030-2.


\bibitem{Ge46} Gerhardt:, C. I. (ed.) Historia et Origo calculi
differentialis a G. G. Leibnitio conscripta, ed. C. I. Gerhardt,
Hannover, 1846.

\bibitem{Ge50} Gerhardt, C. I. (ed.) Leibnizens mathematische
Schriften (Berlin and Halle: Eidmann, 1850-1863).

\bibitem{Giu} Giusti, E.  Les m\'ethodes des maxima et minima de
Fermat.  \emph{Ann. Fac. Sci. Toulouse Math.} (6) \textbf{18} (2009),
Fascicule Special, 59--85.

\bibitem{GJ08} Goldenbaum U.; Jesseph D. (Eds.) Infinitesimal
Differences: Controversies between Leibniz and his Contemporaries.
Berlin-New York: Walter de Gruyter, 2008.

\bibitem{Gu} Guillaume, M. ``Review of Katz, M.; Sherry, D.  Leibniz's
infinitesimals: their fictionality, their modern implementations, and
their foes from Berkeley to Russell and beyond.  \emph{Erkenntnis}
\textbf{78} (2013), no.~3, 571--625.''  Mathematical Reviews (2014).
See http://www.ams.org/mathscinet-getitem?mr=3053644

\bibitem{HLP} Hardy, G.; Littlewood, J. P\'olya, G.  Inequalities.
Second edition.  Cambridge University Press, 1952.


\bibitem{H1898} Heaton, H.  Infinity, the Infinitesimal, and Zero.
\emph{American Mathematical Monthly} \textbf{5} (1898), no.~10,
224--226.


\bibitem{H06BJPS} Herzberg, F.  Internal laws of probability,
generalized likelihoods and Lewis' infinitesimal chances---a response
to Adam Elga.  \emph{British Journal for the Philosophy of Science}
\textbf{58} (2007), no.~1, 25-43.


\bibitem{Her} Herzberg, F.  Stochastic calculus with infinitesimals.
Lecture Notes in Mathematics, 2067. Springer, Heidelberg, 2013.

\bibitem{Hew48} Hewitt, E.  Rings of real-valued continuous
functions. I.  \emph{Trans. Amer. Math. Soc.} \textbf{64} (1948),
45--99.

\bibitem{He73} Heijting, A.  Address to Professor A. Robinson.  At the
occasion of the Brouwer memorial lecture given by Prof.~A. Robinson on
the 26th April 1973.  \emph{Nieuw Archief voor Wiskunde (3)}
\textbf{21} (1973), 134--137.

\bibitem{Is} Ishiguro, H.  Leibniz's philosophy of logic and language.
Second edition.  Cambridge University Press, Cambridge, 1990.

\bibitem{Ja14} Jaroszkiewicz, G.  Principles of Discrete Time
Mechanics.  \emph{Cambridge Monographs on Mathematical Physics}.
Cambridge University Press, 2014.

\bibitem{Je} Jech, T.  The axiom of choice.  Studies in Logic and the
Foundations of Mathematics, Vol. 75. North-Holland Publishing Co.,
Amsterdam-London; Amercan Elsevier Publishing Co., Inc., New York,
1973.


\bibitem{KKM} Kanovei, V.; Katz, M.; Mormann, T.  Tools, Objects, and
Chimeras: Connes on the Role of Hyperreals in Mathematics.
\emph{Foundations of Science} \textbf{18} (2013), no.~2, 259--296.
See http://dx.doi.org/10.1007/s10699-012-9316-5

and http://arxiv.org/abs/1211.0244

\bibitem{kr} Kanovei, V.; Reeken, M.  Nonstandard analysis,
axiomatically.  Springer Monographs in Mathematics, Berlin: Springer,
2004.

\bibitem{KS04} Kanovei, V.; Shelah, S.  A definable nonstandard model
of the reals.  \emph{Journal of Symbolic Logic} \textbf{69} (2004),
no.~1, 159--164.

\bibitem{KK11b} Katz, K.; Katz, M.  Cauchy's continuum.
\emph{Perspectives on Science} \textbf{19} (2011), no.~4, 426-452.
See http://arxiv.org/abs/1108.4201 and

http://www.mitpressjournals.org/doi/abs/10.1162/POSC\_a\_00047

\bibitem{KK11d} Katz, K.; Katz, M.  Meaning in classical mathematics:
is it at odds with Intuitionism?  \emph{Intellectica} \textbf{56}
(2011), no.~2, 223--302.  See

http://arxiv.org/abs/1110.5456


\bibitem{KK11a} Katz, K.; Katz, M. A Burgessian critique of
nominalistic tendencies in contemporary mathematics and its
historiography.  \emph{Foundations of Science} \textbf{17} (2012),
no.~1, 51--89.  See http://dx.doi.org/10.1007/s10699-011-9223-1

and http://arxiv.org/abs/1104.0375



\bibitem{KKK} Katz, K.; Katz, M.; Kudryk, T.  Toward a clarity of the
extreme value theorem.  \emph{Logica Universalis} \textbf{8} (2014),
no.~2, 193-214.  

See http://dx.doi.org/10.1007/s11787-014-0102-8 

and http://arxiv.org/abs/1404.5658
  

\bibitem{KL} Katz, M.; Leichtnam, E.  Commuting and noncommuting
infinitesimals.  \emph{American Mathematical Monthly} \textbf{120}
(2013), no. 7, 631--641.  See
http://dx.doi.org/10.4169/amer.math.monthly.120.07.631 

and http://arxiv.org/abs/1304.0583


\bibitem{KSS13} Katz, M.; Schaps, D.; Shnider, S.  Almost Equal: The
Method of Adequality from Diophantus to Fermat and Beyond.
\emph{Perspectives on Science} \textbf{21} (2013), no.~3, 283-324.

See http://www.mitpressjournals.org/doi/abs/10.1162/POSC\_a\_00101 and
http://arxiv.org/abs/1210.7750

\bibitem{KS2} Katz, M.; Sherry, D.  Leibniz's laws of continuity and
homogeneity.  \emph{Notices of the American Mathematical Society}
\textbf{59} (2012), no.~11, 1550-1558.  See
http://www.ams.org/notices/201211/ and http://arxiv.org/abs/1211.7188

\bibitem{KS1} Katz, M.; Sherry, D.  Leibniz's infinitesimals: Their
fictionality, their modern implementations, and their foes from
Berkeley to Russell and beyond.  \emph{Erkenntnis} \textbf{78} (2013),
no.~3, 571--625.  See http://dx.doi.org/10.1007/s10670-012-9370-y and
http://arxiv.org/abs/1205.0174



\bibitem{Ka14} Katz, V.  ``Review of Bair et al., Is mathematical
history written by the victors?  \emph{Notices Amer. Math. Soc.}
\textbf{60} (2013), no.~7, 886--904.''  \emph{Mathematical Reviews}
(2014).  See http://www.ams.org/mathscinet-getitem?mr=3086638


\bibitem{Ke12} Keisler, H. J.  Elementary Calculus: An Infinitesimal
Approach.  Second Edition.  Prindle, Weber \& Schimidt, Boston, 1986.
Revision from february 2012 online at
http://www.math.wisc.edu/$\sim$keisler/calc.html

\bibitem{Kl08} Klein, F.  Elementary Mathematics from an Advanced
Standpoint.  Vol. I.  Arithmetic, Algebra, Analysis.  Translation by
E. R. Hedrick and C. A. Noble [Macmillan, New York, 1932] from the
third German edition [Springer, Berlin, 1924].  Originally published
as Elementarmathematik vom h\"oheren Standpunkte aus (Leipzig, 1908).

\bibitem{Kn02} Knobloch, E.  Leibniz's rigorous foundation of
infinitesimal geometry by means of Riemannian sums. Foundations of the
formal sciences, 1 (Berlin, 1999).  {\em Synthese\/} \textbf{133}
(2002), no.~1-2, 59--73.

\bibitem{Kn14} Knobloch, E.  ``Review of: Katz, M.; Schaps, D.;
Shnider, S.  Almost equal: the method of adequality from Diophantus to
Fermat and beyond.  \emph{Perspectives on Science} \textbf{21} (2013),
no.~3, 283--324.''  \emph{Mathematical Reviews} (2014).  See
http://www.ams.org/mathscinet-getitem?mr=3114421


\bibitem{Lau87} Laugwitz, D.  Infinitely small quantities in Cauchy's
textbooks. \emph{Historia Mathematica} \textbf{14} (1987), no.~3,
258--274.

\bibitem{Le84} Leibniz, G.  Nova methodus pro maximis et minimis
\ldots, in \emph{Acta Erud.}, Oct. 1684.  See Gerhardt \cite{Ge50}, V,
pp. 220-226.

\bibitem{Le95} Leibniz, G.  (1695) Responsio ad nonnullas
difficultates a Dn.~Bernardo Niewentiit circa methodum differentialem
siu infinitesimalem motas.  In Gerhardt \cite{Ge50}, V, p.~320--328.

\bibitem{Le01c} Leibniz, G.  (1701) \emph{Cum Prodiisset}\ldots mss
``Cum prodiisset atque increbuisset Analysis mea infinitesimalis ..."
in Gerhardt \cite{Ge46}, pp.~39--50.

\bibitem{Le10b} Leibniz, G.  (1710) Symbolismus memorabilis calculi
algebraici et infinitesimalis in comparatione potentiarum et
differentiarum, et de lege homogeneorum transcendentali.  In Gerhardt
\cite[vol.~V, pp.~377-382]{Ge50}.

\bibitem{Lew} Lewis, A.  Bringing modern mathematics to bear on
historical research.  \emph{Historia Mathematica} \textbf{2} (1975),
84-85.

\bibitem{Lew80} Lewis, D.  A subjectivist's guide to objective chance.
In Studies in Inductive Logic and Probability (ed. Richard~C.\
Jeffrey), pp.~263--293.  University of California Press, 1980.

\bibitem{Lo} \Los, J.  Quelques remarques, th\'eor\`emes et
probl\`emes sur les classes d\'efi\-nissables d'alg\`ebres.  In
Mathematical interpretation of formal systems, {98--113},
North-Holland Publishing Co., Amsterdam, 1955.
%
%

\bibitem{MK} Mormann, T.; Katz, M.  Infinitesimals as an issue of
neo-Kantian philosophy of science.  \emph{HOPOS: The Journal of the
International Society for the History of Philosophy of Science}
\textbf{3} (2013), no.~2, 236-280.  

See http://www.jstor.org/stable/10.1086/671348 

and http://arxiv.org/abs/1304.1027


\bibitem{Na14} Nakane, M.  Did Weierstrass's differential calculus
have a limit-avoiding character? His definition of a limit in
$\epsilon$-$\delta$ style.  \emph{BSHM Bulletin: Journal of the
British Society for the History of Mathematics}, \textbf{29} (2014),
no.~1, 51-59.  See http://dx.doi.org/10.1080/17498430.2013.831241

\bibitem{N77} Nelson, E.  Internal set theory: a new approach to
nonstandard analysis. {\it Bulletin of the American Mathematical
Society} {\bf 83} (1977), no.~6, 1165--1198.

\bibitem{N87} Nelson, E.  Radically elementary probability theory. {\em
Annals of Mathematics Studies}, vol.~117. Princeton University Press,
1987.

\bibitem{N11} Nelson, E.  Warning signs of a possible collapse of
contemporary mathematics.  In Infinity (eds. Michael Heller, W. Hugh
Woodin), pp. 76--85. Cambridge University Press, 2011.

\bibitem{Pou} Pourciau, B.  Newton and the notion of limit.
\emph{Historia Mathematica} \textbf{28} (2001), no.~1, 18--30.

\bibitem{R03} Reder, C.  Transient behaviour of a Galton-Watson
process with a large number of types.  \emph{Journal of Applied
Probability} \textbf{40} (2003), no.~4 1007--1030.

\bibitem{Re} Reeder, P.  Internal Set Theory and Euler's
\emph{Introductio in Analysin Infinitorum}.  MSc Thesis, Ohio State
University, 2013.


\bibitem{Ro61} Robinson, A.  Non-standard analysis.  {\em
Nederl. Akad. Wetensch. Proc. Ser. A\/} \textbf{64} = {\em
Indag. Math.\/} \textbf{23} (1961), 432--440 [reprinted in Selected
Works, see item~\cite{Ro79}, pp.~3-11]

\bibitem{Ro79} Robinson, A.  Selected papers of Abraham Robinson.
Vol. II.  Nonstandard analysis and philosophy.  Edited and with
introductions by W. A. J. Luxemburg and S. K\"orner.  Yale University
Press, New Haven, Conn., 1979.

\bibitem{Ru03} Russell, B.  The Principles of Mathematics.  Routledge.
London 1903.

\bibitem{Ry} Ryff, J.  Measure preserving transformations and
rearrangements.  \emph{J. Math. Anal. Appl.} \textbf{31} (1970),
449--458.


\bibitem{Se} Settle, T.  Galilean science: essays in the mechanics and
dynamics of the \emph{Discorsi}.  Ph.D. dissertation, Cornell
University, 1966, 288 pages.


\bibitem{SK} Sherry, D.; Katz, M.  Infinitesimals, imaginaries,
ideals, and fictions.  \emph{Studia Leibnitiana} \textbf{44} (2012),
no.~2, 166-192.  See http://arxiv.org/abs/1304.2137

\bibitem{Sk55} Skolem, T.  Peano's axioms and models of arithmetic.
In Mathematical interpretation of formal systems, pp. 1--14.
North-Holland Publishing Co., Amsterdam, 1955.


\bibitem{Sk80} Skyrms, B.  Causal Necessity: A Pragmatic Investigation
of the Necessity of Laws.  Yale University Press, 1980.

\bibitem{Strom} \Stromholm, P.  Fermat's methods of maxima and minima
and of tangents. A reconstruction.  \emph{Arch. History Exact Sci.}
\textbf{5} (1968), no.~1, 47--69.
%
%

\bibitem{Su} Sullivan, K.  Mathematical Education: The Teaching of
Elementary Calculus Using the Nonstandard Analysis Approach.
\emph{Amer. Math. Monthly} \textbf{83} (1976), no.~5, 370--375.

\bibitem{TK} Tall, D.; Katz, M.  A cognitive analysis of Cauchy's
conceptions of function, continuity, limit, and infinitesimal, with
implications for teaching the calculus.  \emph{Educational Studies in
Mathematics}, to appear.  See

http://dx.doi.org/10.1007/s10649-014-9531-9 

and http://arxiv.org/abs/1401.1468

\bibitem{Tan} Tannery, P., Henry, C.  \emph{Oeuvres de Fermat, Vol. 1}
Gauthier-Villars, 1891.

\bibitem{Tan3} Tannery, P., Henry, C.  \emph{Oeuvres de Fermat,
Vol. 3} Gauthier-Villars, 1896.

\bibitem{Tao} Tao, T.  See the post
http://terrytao.wordpress.com/2013/11/16/qualitative-probability-theory-types-and-the-group-chunk-and-group-configuration-theorems/

\bibitem{Th} Tho, T.  Equivocation in the foundations of Leibniz's
infinitesimal fictions.  \emph{Society and Politics} \textbf{6}
(2012), no.~2, 70--98.

\bibitem{We84} Weil, A.  Number theory. An approach through history.
From Hammurapi to Legendre.  Birkh\"auser Boston, Inc., Boston, MA,
1984.

\bibitem{Whe} Wheeler, J.: At home in the universe.  Masters of Modern
Physics.  American Institute of Physics, Woodbury, NY, 1994.

\bibitem{W23}Wiener, N.  Differential space.  \emph{Journal of
Mathematical Physics} \textbf{2} (1923), no.~1, 131--174.

\bibitem{Wi07} Williamson, T.  How probable is an infinite sequence of
heads?  \emph{Analysis} \textbf{67} (2007), no.~3, 173--180.

\bibitem{Wi74} Wisan, W.  The new science of motion: a study of
Galileo's De motu locali.  \emph{Arch. History Exact Sci.} \textbf{13}
(1974), 103--306.


\end{thebibliography}
\end{document}